\tikzstyle{block} = [draw, fill=blue!20, rectangle, 
\tikzstyle{sum} = [draw, fill=blue!20, circle, node distance=1cm]
\tikzstyle{input} = [coordinate]
\tikzstyle{output} = [coordinate]
\numberwithin{equation}{section}
\newtheorem{theorem}{Theorem}[section]
\newtheorem{proposition}[theorem]{Proposition}
\newtheorem{corollary}[theorem]{Corollary}
\newtheorem{lemma}[theorem]{Lemma}
\newtheorem{thm}{Theorem}
\theoremstyle{definition}
\newtheorem{definition}[theorem]{Definition}
\newtheorem{remark}[theorem]{Remark}
\newcommand{\C}{\mathbb{C}}
\newcommand{\R}{\mathbb{R}}
\newcommand{\D}{\mathbb{D}}
\newcommand{\N}{\mathbb{N}}
\renewcommand{\H}{\mathbb{H}}
\newcommand{\abs}[1]{\left| #1 \right|}
\title[Extremal rate]{Extremal rate of convergence in discrete\\
hyperbolic and parabolic dynamics}
\author[F. J. Cruz-Zamorano]{Francisco J. Cruz-Zamorano}
\address{Departamento de An\'alisis Matem\'atico, Facultad de Ciencias, Universidad de La Laguna, Avenida Astrof\'isico Francisco Sanchez S/N, 38206 San Crist\'obal de La Laguna, Santa Cruz de Tenerife, Spain}
\email{fcruzzam@ull.edu.es}
\author[K. Zarvalis]{Konstantinos Zarvalis}
\address{Department of Mathematics, Aristotle University of Thessaloniki, 54124, Thessaloniki, Greece}
\email{zarkonath@math.auth.gr}
\date{}
\thanks{Cruz-Zamorano was supported by Ministerio de Innovaci\'on y Ciencia, Spain, project PID2022-136320NB-I00 and by Ministerio de Universidades, Spain, through the action Ayuda del Programa de Formaci\'on de Profesorado Universitario, reference FPU21/00258. \\
Zarvalis was partially supported by Junta de Andaluc\'{i}a, grant number QUAL21 005 USE}
\keywords{Hyperbolic map, rate of convergence, Denjoy--Wolff point, conformality, Koenigs function, Herglotz representation, hyperbolic geometry}
\subjclass[2020]{Primary: 30D05, 37F44, 37F99; Secondary: 30C35, 30E20.}
\begin{document}
\begin{abstract}
This paper investigates the dynamical behaviour of holomorphic self-maps of the upper half-plane. More precisely, we focus on the hyperbolic and parabolic self-maps whose orbits approach the Denjoy--Wolff point with the slowest possible rate. We characterize self-maps of such extremal rate using various tools, like the Herglotz representation, the conformality of the Koenigs function at the Denjoy--Wolff point and the hyperbolic distance.
\end{abstract}

\maketitle

\section{Introduction}
The study of the iterative behaviour of holomorphic self-maps $f$ of a domain $\Omega \subset \C$ is a well-established topic of research within Complex Dynamics. When $\Omega$ is the upper half-plane $\H \vcentcolon = \{z \in \C \vcentcolon \mathrm{Im}(z) > 0\}$, the celebrated Denjoy--Wolff Theorem \cite[Theorem 1.8.4]{BCDM} explains the global dynamical behaviour of $f$. More precisely, this theorem asserts that for every non-elliptic holomorphic self-map $f \vcentcolon \H \to \H$ (i.e. $f(z) \neq z$ for all $z \in \H$), the sequence of iterates defined by $f^0 = \mathrm{Id}_{\H}$ and $f^n = f^{n-1} \circ f$, $n \in \N$, converges locally uniformly in $\H$ to a point $\tau \in \R \cup \{\infty\}$ (known as the \textit{Denjoy--Wolff point} of $f$), as $n \to +\infty$. Throughout this article, we will consistently assume that $\tau = \infty$. This normalization can always be achieved by conjugating $f$ with a suitable automorphism of $\H$.

Several recent articles inspect how fast the discrete orbits $\{f^n(z) : n \in \N\cup\{0\}\}$, $z\in\H$, escape to the Denjoy--Wolff point infinity \cite{DB-Rate-H,DB-Rate-P,BCDM-Rate,BCZZ,BK-Speed,Bracci,BCK,CZ-FiniteShift,KTZ}. Namely, they obtain estimates for $\abs{f^n(z)}$ in terms of various dynamical properties of $f$. As we will review in a moment, for different classes of non-elliptic maps, there are known sharp universal lower bounds for this rate. This paper is concerned with what we call the \textit{extremal rate of convergence}, which occurs when the actual rate matches these lower bounds (that is, the convergence of the orbit towards the Denjoy--Wolff point is the slowest possible). Our primary goal is to provide comprehensive characterizations for when a non-elliptic self-map exhibits such an extremal rate. These characterizations will be developed using a variety of tools from Complex Analysis, including the Herglotz representation, the notion of conformality at a boundary point, the asymptotic behaviour of the maps themselves near infinity, hyperbolic geometry, and composition operators.

To elaborate, non-elliptic self-maps with Denjoy--Wolff point infinity are classified based on the angular derivative $f'(\infty) \vcentcolon = \angle\lim_{z\to\infty} (f(z)/z) \in [1,+\infty)$ \cite[Corollary 2.5.5]{Abate}. If $\alpha \vcentcolon = f'(\infty) > 1$, the map $f$ is said to be \textit{hyperbolic}. For these self-maps, Valiron \cite{Valiron} showed that orbits converge non-tangentially, and that the dynamics can be linearized by a Koenigs function $h \vcentcolon \H \to \C$ satisfying $h \circ f = \alpha h$. Due to the Julia Lemma \cite[Theorem 2.1.19]{Abate}, their convergence rate is at least exponential and in fact $\liminf_{n \to +\infty}(\abs{f^n(z)}/\alpha^n) > 0$ for all $z\in\H$. Therefore, it makes sense to wonder about all those self-maps $f$ where the convergence is exactly exponential of order $\alpha$ and the respective limit superior is finite, at least for one $z\in\H$. As it turns out (see Proposition \ref{prop:hyperbolic-rate-no-z}), the actual limit either exists in $\H$ for all $z\in\H$ and its value depends on $z$, or is infinite for all $z\in\H$ (see Remark \ref{rem:argument}). This dichotomy naturally leads to our first main definition. We say that $f$ is of \textit{extremal rate} if $\lim_{n \to +\infty}(\abs{f^n(z)}/\alpha^n) \in (0,+\infty)$, for all $z\in\H$.

If $f'(\infty) = 1$, $f$ is said to be \textit{parabolic}. Parabolic self-maps are further classified by their \textit{hyperbolic step}. Briefly, a parabolic self-map $f$ of $\H$ is said to have \textit{positive hyperbolic step} provided $\lim_{n\to+\infty}d_\H(f^{n+1}(z),f^{n}(z))>0$, for some --- and equivalently all due to \cite[Corollary 4.6.9.(i)]{Abate} --- $z\in\H$, where $d_\H$ denotes the hyperbolic distance in the upper half-plane. If the limit equals $0$ for some --- and equivalently all --- $z\in\H$, then $f$ has \textit{zero hyperbolic step}  (see Subsection \ref{subsec:discrete-dynamics} for more details). For parabolic maps of positive hyperbolic step, Pommerenke \cite{PommerenkeHalfPlane} established the tangential convergence of their orbits and the existence of a Koenigs function $h$ with $h \circ f = h+1$. We will prove that the rate is at least linear in this case and more specifically $\liminf_{n \to +\infty}(\abs{f^n(z)}/n) > 0$ for all $z\in\H$. Similarly to the hyperbolic setting, we will discover (see Proposition \ref{prop:phs}) that either the actual limit is real for all $z\in\H$ with a value independent of $z$, or is infinite for all $z\in\H$. This linear lower estimate for the rate of convergence is basically known to experts and may be derived from previous results concerning continuous semigroups of holomorphic functions \cite{DB-Rate-H,DB-Rate-P}. However, the overly descriptive nature of Proposition \ref{prop:phs} provides new insight. Once again, we are interested in the cases when the convergence is actually linear. Therefore, we say that parabolic maps of positive hyperbolic step are of \textit{extremal rate} if $\lim_{n \to +\infty}(\abs{f^n(z)}/n) \in (0,+\infty)$, for all $z\in\H$. 

It appears that a parabolic self-map of positive hyperbolic step is of extremal rate if and only if it is of \textit{finite shift} (see Proposition \ref{prop:phs}). Self-maps of the upper half-plane with finite shift have been thoroughly studied in recent years. In particular, the property of finite shift has been linked both to the Herglotz representation of the self-map \cite[Theorem 1.5]{CZ-FiniteShift} and the conformality of its Koenigs function at infinity \cite[Theorem 4.1]{CDP-C2}. Therefore, the extremal rate in this class of functions is related to several classical tools of Complex Analysis. This relation motivates our work to investigate similar results in the case of hyperbolic maps.

On the other hand, the situation for parabolic maps of zero hyperbolic step is more intricate, as orbits can exhibit non-trivial sets of slopes \cite[Theorem 2.13]{CCZRP-Slope}. Indeed, no universal lower estimate for the rate of convergence of orbits is known thus far. The lack of such an estimate does not allow the definition of an extremal rate in this class of functions. Hence, our study is devoted exclusively to hyperbolic maps and to a lesser extent to parabolic maps of positive hyperbolic step. The analogue research in the setting of continuous semigroups of holomorphic self-maps --- with special attention to the parabolic case of zero hyperbolic step --- will be covered in \cite{WIP}.

This paper aims to provide a comprehensive study of self-maps of extremal rate for these different classes of non-elliptic dynamics. After Sections \ref{sec:preliminaries} and \ref{sec:extremal rates}, where we give the required preliminaries to ease the exposition and provide the motivation for our work, we prove characterizations which certify the attainment of the extremal rate for self-maps using the following tools:

(i) Through the so-called \textit{Herglotz representation}, one can relate a self-map $f$ of the upper half-plane with a triplet $(\alpha,\beta,\mu)$, where $\alpha \geq 0$, $\beta \in \R$ and $\mu$ is a positive and finite measure on $\R$; see \eqref{eq:Herglotz}. In Section \ref{sec:Herglotz} we determine whether $f$ is of extremal rate in terms of this representation. This extends the characterizations found in \cite{CZ-FiniteShift} concerning self-maps of finite shift. More specifically, for hyperbolic self-maps we will prove the following (cf. Theorem \ref{thm:H-Herglotz}).
\begin{thm}
Let $f\vcentcolon \H \to \H$ be hyperbolic with Denjoy--Wolff point infinity and Herglotz representation 
$$f(z) = \alpha z + \beta + \int_{\R}\dfrac{1+tz}{t-z}d\mu(t), \qquad z \in \H.$$
Then, $f$ is of extremal rate if and only if
$$\int_{\R}\log(1+\abs{t})d\mu(t) < +\infty.$$
\end{thm}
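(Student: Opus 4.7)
The plan is to factor the proof through the conformality of the Koenigs function $h$ of $f$ at the Denjoy--Wolff point $\infty$, following the template of \cite{CDP-C2,CZ-FiniteShift} in the parabolic case. I will prove two equivalences: (a) $f$ is of extremal rate if and only if $h$ is conformal at $\infty$, meaning $\angle\lim_{w\to\infty}h(w)/w$ exists as a non-zero finite complex number; and (b) $h$ is conformal at $\infty$ if and only if $\int_\R\log(1+|t|)\,d\mu(t)<+\infty$.

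For step (a), the identity $h\circ f=\alpha h$ combined with $h\ne 0$ on $\H$ (which holds for the standard normalization, since otherwise the zero set would be an $f$-invariant discrete set and no interior fixed point exists) gives
$$\frac{f^n(z)}{\alpha^n}=h(z)\cdot\Bigl(\frac{h(f^n(z))}{f^n(z)}\Bigr)^{-1},\qquad n\in\N.$$
The orbit $(f^n(z))_n$ escapes to $\infty$ non-tangentially by Valiron's theorem. If $c\vcentcolon=\angle\lim_{w\to\infty}h(w)/w\in\C\setminus\{0\}$ exists, the right-hand side converges to $h(z)/c$, which by Proposition \ref{prop:hyperbolic-rate-no-z} lands in $\H$ and certifies extremal rate. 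Conversely, extremal rate forces $h(w)/w$ to converge to the constant $h(z)/L(z)$ along the orbit of $z$ (constant in $z$ because both $h$ and $L(z)\vcentcolon=\lim f^n(z)/\alpha^n$ solve the Schr\"{o}der equation with the same eigenvalue, so they differ by a multiplicative constant); this upgrades to the angular limit via a Lindel\"{o}f-type argument, exploiting that $h(w)/w$ has a Julia--Wolff--Carath\'{e}odory limit in $[0,+\infty]$ at $\infty$.

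For step (b), I would differentiate the functional equation, $\alpha h'=h'\circ f\cdot f'$, iterate, and pass to logarithms to translate conformality of $h$ at $\infty$ into the convergence of $\sum_{k\ge 0}\log(f'(f^k(z))/\alpha)$. Using the Herglotz-derived formula
$$\frac{f'(w)}{\alpha}=1+\frac{1}{\alpha}\int_\R\frac{1+t^2}{(t-w)^2}\,d\mu(t),$$
expanding $\log(1+x)\sim x$ for small $x$, and invoking Tonelli's theorem to swap sum and integral (the quantities being positive after taking imaginary parts), the question reduces to the finiteness of
$$\int_\R\Bigl(\sum_{k\ge 0}\frac{1+t^2}{|t-f^k(z)|^2}\Bigr)d\mu(t),$$
which I then show is comparable to $\int_\R\log(1+|t|)\,d\mu(t)$ by a direct pointwise estimate of the inner kernel sum, using that the iterates grow like $\alpha^k$.

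The main obstacle lies in making this kernel estimate $\sum_{k}(1+t^2)/|t-f^k(z)|^2\asymp\log(1+|t|)$ precise and uniform in $t\in\R$. The lower Julia bound $|f^k(z)|\gtrsim\alpha^k$ is always at hand, and splitting the sum at the index $k_0(t)$ with $|f^{k_0}(z)|\sim|t|$ yields a geometric tail for $k\ge k_0$ and a bounded-term count of size $\asymp\log|t|/\log\alpha$ for $k<k_0$. Upper bounds on $|f^k(z)|$ are more delicate: they are immediate under the extremal-rate hypothesis (in the forward direction), whereas in the converse direction one is best served by arguing contrapositively via Proposition \ref{prop:hyperbolic-rate-no-z}, showing that if $|f^n(z)|/\alpha^n\to\infty$ then the kernel integral must diverge, hence the integrability $\int_\R\log(1+|t|)\,d\mu(t)<+\infty$ must already fail.
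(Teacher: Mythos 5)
Your plan is structured differently from the paper. You propose to factor through conformality of the Koenigs function $h$ at infinity; that equivalence is the content of the paper's Theorem \ref{thm:conformality hyp}, which is proved \emph{after} the Herglotz characterization, whereas the paper's proof of Theorem \ref{thm:H-Herglotz} works directly with $y_n=\mathrm{Im}(f^n(z))$ and the product $y_n/(\alpha^n y_0)=\prod_{k=0}^{n-1}\bigl(1+\alpha^{-1}\int_\R(1+t^2)|t-z_k|^{-2}d\mu(t)\bigr)$, whose factors are real and $\ge1$. Your step (a) is essentially correct and agrees with Theorem \ref{thm:conformality hyp}; the way you upgrade the sequential limit along the orbit to the angular limit (using the fact that $\angle\lim h(w)/w$ exists a priori from the Herglotz representation of $h$) is a slight variant of the paper's appeal to the $\inf_w\mathrm{Im}(h(w))/\mathrm{Im}(w)$ characterization from Julia--Wolff--Carath\'eodory, and the aside about $h\ne0$ is unnecessary since $h$ takes values in $\H$.

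The genuine gap is in step (b). You differentiate the Schr\"oder equation to reduce conformality of $h$ to convergence of $\sum_k\log\bigl(f'(f^k(z))/\alpha\bigr)$ and then want to swap sum and integral, claiming positivity ``after taking imaginary parts.'' But the kernel appearing in $f'$ is $(1+t^2)/(t-w)^2$, and with $w=x+iy$, $y>0$, one computes $\mathrm{Im}\bigl((t-w)^{-2}\bigr)=2y(t-x)/|t-w|^4$, which changes sign at $t=x$. So neither the real nor the imaginary part of the integrand is sign-definite, Tonelli does not apply, and more seriously the converse direction (conformality of $h$ $\Rightarrow$ $\int\log(1+|t|)\,d\mu<+\infty$) cannot be closed through $f'$: a convergent product $\prod\alpha/f'(f^k(z))$ gives no control on $\sum_k\int(1+t^2)|t-z_k|^{-2}d\mu(t)$ because the complex integrals $\int(1+t^2)(t-z_k)^{-2}d\mu(t)$ may be small by cancellation even when the positive-kernel integrals diverge. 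The fix --- which is what the paper does --- is to take imaginary parts of the \emph{un}differentiated Herglotz formula, giving $\mathrm{Im}(f(w))/\mathrm{Im}(w)=\alpha+\int_\R(1+t^2)|t-w|^{-2}d\mu(t)$ with a manifestly positive kernel; the resulting product over $k$ has each factor $\ge1$, so finiteness of the limit is equivalent to summability of the non-negative integrals, and Fubini applies cleanly. Your final kernel estimate $\sum_k(1+t^2)/|t-z_k|^2\asymp\log(1+|t|)/t^2$ is correct and is exactly Lemma \ref{lemma:estimate-sum} combined with Lemma \ref{lemma:non-tangential}; your plan of handling the upper bound on $|f^k(z)|$ by using the extremal hypothesis forwards and arguing contrapositively backwards is a reasonable alternative to the paper's universal exponential bound $y_n\le C(\alpha+1)^n$.
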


(ii) In Section \ref{sec:conformality} we provide homologous characterizations in terms of the conformality at infinity of the Koenigs function. Recall that, in general, a function $h\vcentcolon\H\to\C$ satisfying $h(\infty)\vcentcolon=\angle\lim_{w\to\infty}h(w)=\infty$ is said to be \textit{conformal at infinity} provided $\angle\lim_{w\to\infty}(h(w)/w)\in\C\setminus\{0\}$. In our main theorem of this section, which expands a result in the case of hyperbolic continuous semigroups \cite[Theorem 4.2]{BCDM-Rate}, we will prove the following (cf. Theorem \ref{thm:conformality hyp}):
\begin{thm}
Let $f\vcentcolon\H\to\H$ be hyperbolic with Denjoy--Wolff point infinity and Koenigs function $h$. Then, $f$ is of extremal rate if and only if $h$ is conformal at infinity.
\end{thm}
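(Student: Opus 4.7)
The plan is to exploit the Koenigs functional equation $h\circ f=\alpha h$ together with Valiron's theorem that the orbits of a hyperbolic self-map converge non-tangentially, combining these with the standard boundary-value theory of holomorphic maps $\H\to\H$.

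For the easy direction, I would assume $h$ is conformal at infinity, set $c\vcentcolon=\angle\lim_{w\to\infty}h(w)/w\in\C\setminus\{0\}$, and first observe that $h$ is zero-free in $\H$: otherwise $h(z_0)=0$ would force $h(f^n(z_0))=\alpha^n h(z_0)=0$ for every $n$, and the univalence of the Koenigs function would yield $f^n(z_0)=z_0$, contradicting the non-ellipticity of $f$. Since the orbit $f^n(z)$ tends non-tangentially to $\infty$, evaluating the conformality limit along $w_n=f^n(z)$ in the identity $h(f^n(z))/f^n(z)=\alpha^n h(z)/f^n(z)$ would yield $f^n(z)/\alpha^n\to h(z)/c$, and taking moduli would give $|f^n(z)|/\alpha^n\to|h(z)/c|\in(0,+\infty)$, i.e., $f$ is of extremal rate.

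For the converse, I would assume $f$ is of extremal rate and invoke Proposition \ref{prop:hyperbolic-rate-no-z} to promote the modulus statement to the existence of the complex limit $F(z)\vcentcolon=\lim_{n\to\infty}f^n(z)/\alpha^n\in\H$ for every $z\in\H$. The functions $g_n(z)\vcentcolon=f^n(z)/\alpha^n$ form a normal family of holomorphic maps $\H\to\H$ (via the Cayley transform and Montel); since every subsequential limit must agree with the pointwise limit $F$, the convergence $g_n\to F$ is locally uniform, and Hurwitz together with the open mapping theorem give a holomorphic non-constant $F\colon\H\to\H$. Passing to the limit in $g_{n+1}=\alpha^{-1}(g_n\circ f)$ would produce $F\circ f=\alpha F$, so $F$ is a Koenigs function of $f$, and the uniqueness of the Koenigs function up to multiplicative constants would force $F=ch$ for some $c\in\C\setminus\{0\}$.

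The remaining and crucial step is to read off the angular derivative of $F$ at infinity. A direct substitution using $F\circ f=\alpha F$ gives
$$\frac{F(f^n(z))}{f^n(z)}=\frac{\alpha^n F(z)}{f^n(z)}=\frac{F(z)}{g_n(z)}\longrightarrow 1,$$
so $F(w)/w\to 1$ along the non-tangential sequence $\{f^n(z)\}$. On the other hand, because $F\colon\H\to\H$, its Herglotz representation together with a routine dominated-convergence computation (a Julia--Wolff--Carath\'eodory argument) shows that $\angle\lim_{w\to\infty}F(w)/w$ exists and equals the leading Herglotz coefficient $\alpha_F\in[0,+\infty)$. Comparing the two computations would force $\alpha_F=1$, and therefore $\angle\lim_{w\to\infty}h(w)/w=1/c\in\C\setminus\{0\}$, namely the conformality of $h$ at infinity. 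I expect the most delicate point to be the rigorous identification $F=ch$ via the uniqueness of the Koenigs function; once this is in place, the orbit computation combined with the Herglotz/Julia--Wolff--Carath\'eodory step concludes the proof quickly.
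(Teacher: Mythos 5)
Your proof is correct but takes a genuinely different route from the paper. For the implication ``conformal $\Rightarrow$ extremal'' you evaluate the conformality limit of $h$ directly along the non-tangentially convergent orbit $\{f^n(z)\}$ and read off $f^n(z)/\alpha^n\to h(z)/c$; the paper instead argues the contrapositive, showing that if $\lvert f^n(z)\rvert/\alpha^n\to+\infty$ then $\mathrm{Im}(h(f^n(z)))/\mathrm{Im}(f^n(z))\to 0$, which violates the Julia--Wolff--Carath\'eodory reformulation $\inf_{z\in\H}\mathrm{Im}(h(z))/\mathrm{Im}(z)>0$ of conformality. For ``extremal $\Rightarrow$ conformal'' you construct $F=\lim_n f^n/\alpha^n$ as a holomorphic self-map of $\H$ solving $F\circ f=\alpha F$, identify $F$ with a positive multiple of $h$, and compute the angular derivative of $F$ directly from the functional equation along an orbit; the paper instead works with Valiron's normalized iterates $h_n(z)=f^n(z)/\lvert f^n(i)\rvert$ and the Julia Lemma to bound $\mathrm{Im}(h_n(z))/\mathrm{Im}(z)$ from below, never invoking an abstract uniqueness theorem. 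Your route is arguably more conceptual, but it does rely on the uniqueness of the Koenigs function up to a positive multiplicative constant, which you flag as the delicate step; note that Proposition~\ref{prop:hyperbolic-rate-no-z} of the paper already establishes $L(z)=h(z)\lvert L(i)\rvert$, i.e.\ $F=\lvert L(i)\rvert\,h$, by comparing $g_n=f^n/\alpha^n$ with Valiron's $h_n$, so this step can be made elementary without appealing to any general uniqueness result. One small blemish: your justification that $h$ is zero-free appeals to the univalence of the Koenigs function, which does not hold in general (it holds only when $f$ is univalent). The conclusion is nevertheless trivially true here because the Valiron Koenigs function takes values in $\H$, which does not contain $0$; you should invoke this instead. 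Likewise, since $h\colon\H\to\H$, the angular derivative $c=\angle\lim h(w)/w$ is automatically a non-negative real, so the ``$c\in\C\setminus\{0\}$'' you write should be sharpened to $c>0$ to justify that $h(z)/c\in\H$.
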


(iii) Further characterizations are given in Section \ref{sec:behaviour}, where we investigate whether $f$ is of extremal rate in terms of its asymptotic behaviour near infinity. A similar idea first appeared in \cite{Pommerenke-Hyperbolic}. Using different techniques, we provide analogous results through Theorem \ref{thm:H-properties}.

(iv) We also explore a novel characterization based on the asymptotic behaviour of the hyperbolic distance between orbits and reference points in Section \ref{sec:hyp-dist}. For instance, in the parabolic case of positive hyperbolic step, we prove the following (cf. Theorem \ref{thm:hyperbolic rate phs}):
\begin{thm}
Let $f:\H\to\H$ be parabolic of positive hyperbolic step with Denjoy--Wolff point infinity. Then, $f$ is of extremal rate (equivalently, of finite shift) if and only if
\begin{equation*}
\lim\limits_{n\to+\infty}\left(d_\H(i,f^n(z))-\log (n)\right)\in\R, \quad \textup{for all }z\in\H.
\end{equation*}
\end{thm}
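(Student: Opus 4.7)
The plan is to exploit an explicit asymptotic for $d_\H(i,\cdot)$ near $\infty$. From the closed-form $d_\H(i,w)=\log\bigl((|w+i|+|w-i|)/(2\sqrt{\textup{Im}(w)})\bigr)$ and $|w\pm i|=|w|(1+O(|w|^{-2}))$, a direct expansion yields $d_\H(i,w)=\log|w|-\tfrac{1}{2}\log\textup{Im}(w)+o(1)$ as $|w|\to\infty$ in $\H$. Since $|f^n(z)|\to\infty$ by the Denjoy--Wolff theorem, substituting $w=f^n(z)$ reduces the statement to the equivalence
\begin{equation*}
\lim_n\frac{|f^n(z)|^2}{n^2\,\textup{Im}(f^n(z))}\in(0,+\infty)\quad\Longleftrightarrow\quad f\text{ is of extremal rate}.
\end{equation*}

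For the forward direction I would invoke Proposition \ref{prop:phs} to identify extremal rate with finite shift; the latter guarantees that $\{f^n(z)-n\beta\}_n$ is bounded, where $\beta=\lim_n f^n(z)/n\in\R\setminus\{0\}$. Julia's lemma at $\infty$ (angular derivative $=1$) gives $\textup{Im}(f^{n+1}(z))\geq\textup{Im}(f^n(z))\geq\textup{Im}(z)>0$, so the non-decreasing sequence $\textup{Im}(f^n(z))$ is also bounded above, hence converges to some $\ell(z)>0$. Combined with $|f^n(z)|/n\to|\beta|$, the reduced quantity tends to $|\beta|^2/\ell(z)\in(0,+\infty)$.

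For the converse I would argue by contradiction: Proposition \ref{prop:phs} leaves only the possibility $|f^n(z)|/n\to+\infty$, and under this hypothesis the reduction forces $\textup{Im}(f^n(z))\to+\infty$. The vertical lower bound $d_\H(f^n,f^{n+1})\geq\tfrac12\log(\textup{Im}(f^{n+1})/\textup{Im}(f^n))$ combined with $\sigma=\lim_n d_\H(f^n,f^{n+1})>0$ yields $\textup{Im}(f^{n+1})/\textup{Im}(f^n)\leq e^{2\sigma}$, so $\textup{Im}(f^n)$ grows at most geometrically. If $\textup{Im}(f^n(z))/n^2\to\infty$, the trivial estimate $|f^n(z)|\geq\textup{Im}(f^n(z))$ already contradicts the balance condition. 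Otherwise $\textup{Im}(f^n)=O(n^2)$ and $\textup{Im}(f^{n+1})/\textup{Im}(f^n)\to 1$; expanding $\cosh(d_\H(f^n,f^{n+1}))=|f^n-\overline{f^{n+1}}|/(2\sqrt{\textup{Im}(f^n)\textup{Im}(f^{n+1})})$ through $|a-\bar b|^2=(\textup{Re}(a-b))^2+(\textup{Im}(a)+\textup{Im}(b))^2$ produces the asymptotic $|\textup{Re}(f^{n+1}-f^n)|\sim 2\sinh(\sigma)\,\textup{Im}(f^n)$, telescoping with Julia-monotonicity gives $|f^n(z)|\gtrsim n\,\textup{Im}(f^{\lfloor n/2\rfloor}(z))$, and the balance condition then forces $\textup{Im}(f^{\lfloor n/2\rfloor}(z))^2/\textup{Im}(f^n(z))$ to be bounded, which is impossible for a non-decreasing polynomial-order sequence tending to $+\infty$.

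The principal difficulty will be making the asymptotic equivalence $|\textup{Re}(f^{n+1}-f^n)|\sim 2\sinh(\sigma)\textup{Im}(f^n)$ fully rigorous in the sub-exponential sub-case of the converse, where one must control error terms arising from a non-uniform convergence $\textup{Im}(f^{n+1})/\textup{Im}(f^n)\to 1$. A potentially cleaner alternative route, which I would also attempt, is to pass through the Koenigs function $h$ and use the equivalence of finite shift with conformality of $h$ at $\infty$ (the parabolic analogue of Theorem \ref{thm:conformality hyp}), thereby translating the hyperbolic-distance condition into a geometric condition on $\Omega=h(\H)$ near $\infty$.
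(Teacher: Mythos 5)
Your forward direction matches the paper's: both reduce, via the explicit formula for $d_\H(i,\cdot)$, to showing that $\tfrac{|f^n(z)|^2}{n^2\mathrm{Im}(f^n(z))}$ converges in $(0,+\infty)$, which for a finite-shift map follows from $\mathrm{Im}(f^n(z))\to I(z)$ and $|f^n(z)|/n\to |L(z)|$. (The parenthetical claim that finite shift makes $\{f^n(z)-n\beta\}$ bounded is false in general — finite shift only bounds the imaginary part — but you never use it.)

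For the converse the two arguments diverge. The paper passes through the Koenigs function: $h$ fails to be conformal at infinity, so by Lemma \ref{lm:conformality hyperbolic distance} one gets $\liminf_{w\to\infty}\bigl(d_\H(i,w)-d_\H(i,h(w))\bigr)=+\infty$, and feeding in $w=f^n(z)$ with $h\circ f^n=h+n$ and the elementary asymptotic $d_\H(i,w+n)-\log n\to -\tfrac12\log\mathrm{Im}(w)$ yields immediately that the limit is $+\infty$. This is the route you flag at the end as the ``cleaner alternative,'' and it is indeed the one the paper takes. Your primary route is a self-contained telescoping estimate: using Pommerenke's $b=\lim(x_{n+1}-x_n)/y_n\neq 0$ (your derivation via $\cosh d_\H(f^n,f^{n+1})$ essentially re-proves this, and is unnecessary given Theorem \ref{thm:Koenigs-P}(a)) one gets $|f^n(z)|\gtrsim n\,y_{\lfloor n/2\rfloor}$, and if the balance $|f^n|^2\asymp n^2y_n$ held this would force $y_{\lfloor n/2\rfloor}^2\lesssim y_n$, which is incompatible with $y_n\uparrow\infty$ and $y_n=O(n^2)$ (one sees this by iterating $a_{k+1}\geq a_k^2/M$ along $n=2^kn_0$, giving doubly exponential growth). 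This works, but note two small repairs: the dichotomy ``either $y_n/n^2\to\infty$ or $y_n=O(n^2)$'' is not exhaustive; you should instead observe that the balance condition itself gives $y_n=O(n^2)$ via $y_n\leq|f^n(z)|\asymp n\sqrt{y_n}$. And the ``impossible for a non-decreasing polynomial-order sequence'' assertion needs the iteration argument spelled out rather than merely stated. In short: your scheme is correct and more elementary, but requires these patches; the paper's use of the conformality/hyperbolic-distance lemma is shorter and strictly cleaner, and also directly shows the limit equals $+\infty$ rather than merely failing to be real.
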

We also prove a hyperbolic analogue in Theorem \ref{thm:hyperbolic rate hyp}. Both statements build upon results found in \cite{Bracci}.

(v) Finally, in Section \ref{sec:comp-operators}, we translate our notion of extremality in the setting of the unit disc. Then, we apply our results in order to examine additional implications about the norms of the associated composition operators acting on classical spaces of analytic functions of the unit disc $\D$. The conclusions on this topic are contained in Corollaries \ref{thm:norm hyp} and \ref{thm:norm phs}.

\section{Preliminaries}
\label{sec:preliminaries}

\subsection{Hyperbolic distance}
Throughout the article we will heavily rely on hyperbolic geometry, and especially on the so-called \textit{hyperbolic distance}. For a concise presentation of its theory, we refer the interested reader to \cite[Chapter 5]{BCDM}. Since our study has the upper half-plane as its principal setting, we will solely work with the hyperbolic distance $d_\H$, which is given by the formula
\begin{equation}
\label{eq:hypdistance in H}
d_{\H}(z,w) = \dfrac{1}{2}\log\left(\dfrac{1+\rho_{\H}(z,w)}{1-\rho_{\H}(z,w)}\right), \quad \rho_{\H}(z,w) =\left| \dfrac{z-w}{z-\overline{w}}\right|, \quad z,w \in \H,
\end{equation}
where $\rho_\H$ is called the \textit{pseudo-hyperbolic distance} in $\H$.

\subsection{Discrete dynamics}
\label{subsec:discrete-dynamics}
A holomorphic self-map $f \vcentcolon \H \to \H$ is said to be \textit{non-elliptic} if $f(z) \neq z$ for all $z \in \H$. As stated in the Introduction, every such non-elliptic map has a Denjoy--Wolff point which is either infinity or some real number. From now on, we will always consider the Denjoy--Wolff point of a non-elliptic $f\vcentcolon\H\to\H$ to be infinity. In such a scenario, we have (up to a conjugation, see \cite[Corollary 2.5.5]{Abate})
$$f(\infty)\vcentcolon =\angle\lim_{z \to \infty}f(z) = \infty, \qquad f'(\infty) \vcentcolon = \angle\lim_{z \to \infty}\dfrac{f(z)}{z} \in [1,+\infty).$$
Given a point $z \in \H$, its \textit{orbit} under $f$ is the sequence $\{f^n(z)\}$, $n\in\N\cup\{0\}$, where $f^0 = \mathrm{Id_{\H}}$ and $f^{n} = f^{n-1} \circ f$, $n \in \N$. The main goal in Discrete Dynamics is to inspect the asymptotic behaviour of the orbits. A first observation is that, due to our assumption on the Denjoy--Wolff point, $\lim_{n \to +\infty}f^n(z) = \infty$. However, the dynamical behaviour of $\{f^n(z)\}$ greatly depends on $f$, as we are about to clarify.

Whenever $f'(\infty)>1$, the self-map $f$ is said to be \textit{hyperbolic}. In that case, Valiron \cite{Valiron} proved that the orbits of $f$ converge non-tangentially and with a definite angle. In other words, hyperbolicity ensures the existence in $(0,\pi)$ of the limit $\lim_{n \to +\infty}\arg(f^n(z))$ \cite[Theorem 4.3.4]{Abate}. Furthermore, disjoint orbits may land at infinity with different angles. In fact, for every $\theta \in (0,\pi)$ there exists $z \in \H$ such that $\lim_{n \to +\infty}\arg(f^n(z)) = \theta$ \cite[Property (2) (b)]{BracciCorradiniValiron}. In addition, Valiron proved that the dynamical behaviour of a hyperbolic map can be linearized in the following way:
\begin{theorem}
\label{thm:Valiron}
{\rm \cite{Valiron}}
Let $f \vcentcolon \H \to \H$ be hyperbolic with Denjoy--Wolff point infinity. Let $h_n \vcentcolon \H \to \H$, $n\in\N$, be given by
$$h_n(z) = \dfrac{f^n(z)}{\abs{f^n(i)}}, \qquad z \in \H.$$
Then, $\{h_n\}$ converges locally uniformly on $\H$ to a non-constant holomorphic map $h \vcentcolon \H \to \H$, as $n \to +\infty$. Moreover, $h$ satisfies $h \circ f = \alpha h$, where $\alpha = f'(\infty) > 1$.
\end{theorem}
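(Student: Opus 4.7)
The plan is to proceed in three stages: establish normality of $\{h_n\}$ and pinpoint the value of any subsequential limit at $i$; derive the intertwining equation and non-constancy; and prove uniqueness of the subsequential limit --- the main obstacle. For the first stage, each $h_n$ is a holomorphic self-map of $\H$, because $z\mapsto z/|f^n(i)|$ is an automorphism of $\H$; by Montel's theorem, $\{h_n\}$ is a normal family, and every subsequential limit is either a holomorphic map $\H\to\H$ or a constant in $\R\cup\{\infty\}$. The non-tangential convergence of orbits (a classical consequence of Julia's lemma applied to $f^n$) gives $\arg f^n(i)\to\theta_\star\in(0,\pi)$, and since $|h_n(i)|=1$ this yields $h_n(i)\to e^{i\theta_\star}\in\H$; denote this limit $\zeta$. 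Any subsequential limit $h$ therefore has $h(i)=\zeta$, ruling out constant boundary limits.

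For the second stage, I would use the identity
\[
h_n(f(z))=\frac{f^{n+1}(z)}{|f^n(i)|}=\frac{f(f^n(z))}{f^n(z)}\,h_n(z).
\]
Since $f^n(z)\to\infty$ non-tangentially for every $z\in\H$, the angular derivative $\alpha=\angle\lim_{w\to\infty} f(w)/w$ forces $f(f^n(z))/f^n(z)\to\alpha$. Passing to a subsequential limit $h$, one obtains $h\circ f=\alpha h$. A constant value $h\equiv c$ would force $c=\alpha c$, hence $c=0$, contradicting $h(i)=\zeta\neq 0$; thus $h$ is a non-constant holomorphic self-map of $\H$ intertwining $f$ with the dilation $z\mapsto\alpha z$.

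For the third stage, let $h_1, h_2$ be two subsequential limits; both are non-constant holomorphic self-maps of $\H$ satisfying $h_j\circ f=\alpha h_j$ and $h_j(i)=\zeta$. I would conclude $h_1\equiv h_2$ by invoking the classical uniqueness, up to a positive multiplicative constant, of the Koenigs intertwiner for a hyperbolic self-map of $\H$; the normalization $h_j(i)=\zeta$ then fixes the constant to be $1$. This uniqueness is standardly proved by descending the quotient $h_1/h_2$ to the Riemann surface associated with the $f$-action --- conformally an annulus in the hyperbolic case --- where the ratio must be constant. Hence the full sequence $\{h_n\}$ converges locally uniformly on $\H$ to the desired Koenigs function.
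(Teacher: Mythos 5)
Since the paper cites Theorem~\ref{thm:Valiron} to Valiron without reproducing a proof, there is no in-paper argument to compare against; I will assess your proposal on its own merits.

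Stages 1 and 2 are correct and well organized: normality via Montel, $h_n(i)\to e^{i\theta_\star}$ from the definite-angle theorem, the intertwining identity from $f(f^n(z))/f^n(z)\to\alpha$ along a non-tangential orbit, and the exclusion of constant limits via $c=\alpha c$. The real weight of the theorem, however, sits in Stage 3, and that is where your argument has a genuine gap. First, the object ``Riemann surface associated with the $f$-action'' is not $\H/\langle f\rangle$ unless $f$ is an automorphism of $\H$; for a general (non-injective, non-surjective) hyperbolic self-map one must pass to the abstract basin of attraction, a non-trivial construction that you would need to set up and whose relation to $h_1/h_2$ is not immediate. Second, even in the automorphism case where the quotient is a genuine annulus, ``the ratio must be constant'' does not follow: $g=h_1/h_2$ descends to a holomorphic map from an annulus into $\C\setminus(-\infty,0]$, and there are plenty of non-constant such maps (e.g.\ bounded periodic functions on the corresponding strip). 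Boundedness of $\arg g$ alone does not force constancy, so a further argument is needed and you have not supplied it. Third, the underlying fact you invoke --- uniqueness, up to a positive multiplicative constant, of a holomorphic $h\vcentcolon\H\to\H$ satisfying $h\circ f=\alpha h$ for a general hyperbolic $f$ --- is itself essentially equivalent in difficulty to the convergence statement you are trying to prove, so appealing to it as ``classical'' without a precise reference or proof leaves a circular-feeling hole.

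In practice, the proofs of Valiron's theorem in the literature sidestep the uniqueness-of-subsequential-limits route entirely. They establish convergence of the full sequence $\{h_n\}$ directly: one shows $\arg h_n(z)=\arg f^n(z)$ converges (the definite-angle theorem), and separately that $\mathrm{Im}(h_n(z))$ and $|h_n(z)|$ converge via Julia's Lemma together with Harnack-type comparisons of the positive harmonic functions $\mathrm{Im}(f^n)$ at two base points, making the normal-families step a safety net rather than the engine. If you want to keep the normality-plus-uniqueness architecture, you must either (i) prove the uniqueness lemma honestly (via the abstract basin, with a normalization that pins down the constant), or (ii) replace Stage 3 by a direct argument showing $h_{n+1}(z)/h_n(z)\to 1$ fast enough to force convergence of the whole sequence.
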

The map $h$ appearing in the statement is known as the \textit{Koenigs function} of $f$.

On the other hand, if $f'(\infty)=1$, the map $f$ is said to be \textit{parabolic}. Parabolic self-maps are further classified with regard to their hyperbolic step. More precisely, a non-elliptic self-map $f$ is said to be of \textit{positive hyperbolic step} if
\begin{equation*}
\lim_{n \to +\infty}d_{\H}(f^{n+1}(z),f^n(z)) > 0 
\end{equation*}
for some (and equivalently all) $z \in \H$ (see \cite[Corollary 4.6.9]{Abate} for more information). Notice that the convergence of the limit is guaranteed by the Schwarz--Pick Lemma \cite[Corollary 1.1.16]{Abate}. If the limit is zero, then $f$ is said to be of \textit{zero hyperbolic step}.

Parabolic self-maps of positive hyperbolic step produce orbits that converge tangentially, that is $\lim_{n \to +\infty}\arg(f^n(z)) \in \{0,\pi\}$ for all $z\in\H$; see \cite[Remark 1]{PommerenkeHalfPlane}. On the contrary, the set of slopes can be non-trivial in the parabolic case of zero hyperbolic step \cite[Theorem 2.13]{CCZRP-Slope}.

Pommerenke, partly in collaboration with Baker \cite{BakerPommerenke,PommerenkeHalfPlane}, investigated maps that intertwine $f$ with a linear map. We summarize some of their results that we are going to need later on.
\begin{theorem}
\label{thm:Koenigs-P}
Let $f \vcentcolon \H \to \H$ be parabolic with Denjoy--Wolff point infinity.

{\rm (a)} {\normalfont \cite[Theorem 1]{PommerenkeHalfPlane}}
Fix $z_0\in\H$ and set $z_n=x_n+iy_n\vcentcolon =f^n(z_0)$. Then, the limit
$$b \vcentcolon = \lim_{n \to + \infty}\dfrac{x_{n+1}-x_n}{y_n}$$
exists in $\R$. Furthermore, $z_{n+1}/z_n \to 1$ and $y_{n+1}/y_n \to 1$, as $n \to + \infty$. Additionally, $b = 0$ if and only if $f$ is of zero hyperbolic step.

{\rm (b)} {\normalfont \cite[Theorem 4.6.8]{Abate}}
Then there exists a holomorphic map $h \vcentcolon \H \to \C$ such that $h \circ f = h + 1$.
\end{theorem}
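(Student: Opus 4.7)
The plan is to establish (a) and (b) in turn, starting from the Herglotz representation of $f$. Since $f'(\infty)=1$, we have
\begin{equation*}
f(z) = z + \beta + \int_\R \frac{1+tz}{t-z}\,d\mu(t), \quad z\in\H,
\end{equation*}
with $\beta\in\R$ and $\mu$ a finite positive Borel measure on $\R$. A direct computation of imaginary parts gives
$\operatorname{Im}(f(z)-z) = \operatorname{Im}(z)\int_\R (1+t^2)/|t-z|^2\,d\mu(t) \geq 0$, so the sequence $\{y_n\}$ is monotone non-decreasing, while the Denjoy--Wolff Theorem gives $z_n \to \infty$.

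For part (a), I set $u_n \vcentcolon = (x_{n+1}-x_n)/y_n$ and $v_n \vcentcolon = (y_{n+1}-y_n)/y_n \geq 0$. By the Schwarz--Pick Lemma, $d_\H(z_{n+1},z_n)$ is non-increasing, so it converges to the hyperbolic step $s \in [0,+\infty)$. A direct computation using \eqref{eq:hypdistance in H} yields
\begin{equation*}
\rho_\H(z_{n+1},z_n)^2 = \frac{u_n^2 + v_n^2}{u_n^2 + (2+v_n)^2}.
\end{equation*}
The first key step is to prove $v_n \to 0$: this is extracted from the Herglotz expression for $(y_{n+1}-y_n)/y_n$, combining the monotonicity of $y_n$ with the finiteness of $\mu$ via a careful splitting of the integration domain (near $x_n$ versus far from $x_n$) and the fact that $|t-z_n|\geq y_n$. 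Once $v_n \to 0$, passing to the limit in the formula for $\rho_\H(z_{n+1},z_n)^2$ forces $u_n^2 \to 4\sinh^2(s)$, so $|u_n|$ converges. To upgrade this to a signed limit $b\in\R$, I would examine $x_{n+1}-x_n = \beta + \operatorname{Re}\int (1+tz_n)/(t-z_n)\,d\mu(t)$ and show that its sign stabilises along the orbit. The equivalence $b = 0 \iff s = 0$ is then immediate from $|b| = 2\sinh(s)$. The ratios $z_{n+1}/z_n \to 1$ and $y_{n+1}/y_n \to 1$ follow from $v_n \to 0$ together with the boundedness of $|u_n|$ and $z_n \to \infty$.

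For part (b), I would construct the Koenigs function through a normal-families argument. Define
\begin{equation*}
h_n(z) \vcentcolon = \frac{f^n(z) - f^n(i)}{f^{n+1}(i) - f^n(i)},
\end{equation*}
so that $h_n(i) = 0$ and $h_n(f(i)) = 1$. Using the Schwarz--Pick Lemma together with the quantitative control from part (a), one shows $\{h_n\}$ is locally uniformly bounded and hence a normal family. Any subsequential limit $h$ is non-constant by the two normalisations, and the identity
\begin{equation*}
h_n(f(z)) - h_n(z) = \frac{f^{n+1}(z) - f^n(z)}{f^{n+1}(i) - f^n(i)},
\end{equation*}
coupled with the fact from part (a) (applied at the base points $z$ and $i$) that the right-hand side tends locally uniformly to $1$, yields $h\circ f = h+1$ in the limit.

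The main obstacle is the proof that $v_n \to 0$, especially when the orbit approaches infinity tangentially: in that regime, the integrand $(1+t^2)/|t-z_n|^2$ is not uniformly dominated in $t$ independently of $n$, so a naive dominated-convergence argument fails. Circumventing this requires exploiting both the growth of $|z_n|$ and the slow growth of $y_n$ to bound the Herglotz tail carefully. A secondary difficulty is the sign-stabilisation needed to pass from $|u_n|^2 \to b^2$ to $u_n \to b$, which is delicate because the real-part integrand $\operatorname{Re}((1+tz)/(t-z))$ does not have a definite sign.
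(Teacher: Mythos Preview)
The paper does not prove Theorem~\ref{thm:Koenigs-P}: both parts are quoted from the literature (Pommerenke~\cite{PommerenkeHalfPlane} and Abate~\cite{Abate}) as preliminaries, with no argument supplied. So there is no ``paper's own proof'' to compare against; what follows is an assessment of your sketch on its merits.

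Your overall architecture is the classical one, and the identity $|b|=2\sinh(s)$ linking the real-part increment to the hyperbolic step is correct and elegant. However, the step you flag as the main obstacle is a genuine gap, not just a technicality. In the regime $y_n\to+\infty$ \emph{with tangential convergence} (which is precisely the generic regime for positive hyperbolic step with infinite shift), the Herglotz integrand $(1+t^2)/|t-z_n|^2$ is \emph{not} controllable by the splitting you describe: near $t=x_n$ it is of order $(1+x_n^2)/y_n^2$, and $|x_n|/y_n\to+\infty$, so neither dominated convergence nor a tail estimate on $\mu$ closes the argument. The case $\sup_n y_n<\infty$ is of course trivial (monotone bounded sequence), but the unbounded tangential case is exactly where Pommerenke's original proof does \emph{not} go through the Herglotz representation; it relies instead on a normal-families/rescaling argument applied to the maps $z\mapsto (f(y_nz+x_n)-x_n)/y_n$, together with the angular-derivative hypothesis $f'(\infty)=1$. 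Your plan, as written, does not supply an alternative mechanism here.

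There is a second, subtler issue in part~(b). You assert that $(f^{n+1}(z)-f^n(z))/(f^{n+1}(i)-f^n(i))\to 1$ locally uniformly ``by part~(a)''. But part~(a) only gives $(f^{n+1}(z)-f^n(z))\sim b(z)\,\mathrm{Im}(f^n(z))$, and the limit $b(z_0)=\lim_n(x_{n+1}-x_n)/y_n$ \emph{depends on the base point} $z_0$ (e.g.\ for $f(z)=z+1$ one has $b(z_0)=1/\mathrm{Im}(z_0)$). So the ratio you need is $\frac{b(z)\,\mathrm{Im}(f^n(z))}{b(i)\,\mathrm{Im}(f^n(i))}$, and showing this tends to~$1$ is an additional step --- one which, in the zero-hyperbolic-step case $b=0$, becomes a $0/0$ indeterminate form and requires the separate Baker--Pommerenke construction. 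Your sketch does not distinguish these cases.
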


If $f$ is of positive hyperbolic step, Pommerenke \cite[Theorem 1]{PommerenkeHalfPlane} constructed a solution of the equation $h \circ f = h + 1$ using the iterates of $f$. This map is known as the Koenigs map of $f$. Evidently, $h\circ f^n=h+n$, for all $n\in\N$.

Finally, non-elliptic self-maps may be further distinguished into two subclasses, regardless of their angular derivative at infinity or their hyperbolic step. More specifically, a non-elliptic self-map $f \vcentcolon \H \to \H$ with Denjoy--Wolff point infinity is said to be of \textit{finite shift} if $\lim_{n \to +\infty}\mathrm{Im}(f^n(z)) < +\infty$ for some (and hence all) $z \in \H$. Note that by the Julia Lemma the sequence $\{\mathrm{Im}(f^n(z))\}$ is non-decreasing and so its limit necessarily exists in $(0,+\infty]$; see \cite[Proposition 3.2]{CDP-C2}. If this limit is infinite, we say that $f$ is of \textit{infinite shift}. It may be proved that the class of non-elliptic self-maps with finite shift is a subclass of that of parabolic self-maps with positive hyperbolic step; see \cite[Proposition 3.3]{CDP-C2}.

\section{Extremal Rates}
\label{sec:extremal rates}

In this section we provide the groundwork for our work to follow, proving our first main results concerning the rate of divergence of $f^n$ to infinity, where $f\vcentcolon\H\to\H$ is holomorphic and non-elliptic with Denjoy--Wolff point infinity. Through these results, we will then proceed to the definition of \textit{extremal rate} for different classes of non-elliptic self-maps of the upper half-plane.

We commence with hyperbolic functions. As indicated in the Introduction, a consecutive application of the Julia Lemma \cite[Theorem 2.1.10]{Abate} assures that $\abs{f^n(z)} \geq \mathrm{Im}(z)\alpha^n$ for all $z \in \H$ and all $n \in \N$, where $\alpha \vcentcolon = f'(\infty) > 1$. That is, the orbit of a hyperbolic self-map converges to infinity (at least) exponentially fast. Even though the slowest possible rate is already known, we will prove an even stronger result which refines \cite[Lemma on p. 121]{Valiron}.

\begin{proposition}
\label{prop:hyperbolic-rate-no-z}
Let $f \vcentcolon \H \to \H$ be hyperbolic with Denjoy--Wolff point infinity, and set $\alpha = f'(\infty) > 1$. Then, for each $z \in \H$,
$$L(z) \vcentcolon = \lim_{n \to +\infty}\dfrac{f^n(z)}{\alpha^n} \in \H \cup \{\infty\}.$$
Moreover, if $L(z) = \infty$ for some $z \in \H$, then $L(z) = \infty$ for all $z \in \H$.
\begin{proof}
First of all, set $x_n \vcentcolon = \mathrm{Re}(f^n(i))$ and $y_n \vcentcolon = \mathrm{Im}(f^n(i))$, $n\in\N$. With this notation,
\begin{equation}\label{eq:L(z)}
\dfrac{f^n(i)}{\alpha^n} = \dfrac{x_n+iy_n}{\alpha^n} = \dfrac{y_n}{\alpha^n}\left(\dfrac{x_n}{y_n}+i\right).
\end{equation}
Since $f$ is hyperbolic, the orbit of $i$ converges to infinity with a definite angle in $(0,\pi)$. Ergo $\lim_{n \to +\infty} (x_n/y_n) \in \R$. Also, for each $n\in\N$,
$$\dfrac{y_{n+1}}{\alpha^{n+1}} = \dfrac{\alpha y_n + (y_{n+1}-\alpha y_n)}{\alpha^{n+1}} \geq \dfrac{y_n}{\alpha^n},$$
since $y_{n+1}-\alpha y_n \geq 0$ (recall that, due to the Julia Lemma, $z \mapsto f(z)-\alpha z$ is a self-map of $\H$). Therefore, the sequence $\{y_n/\alpha^n\}$ is increasing and its limit exists in $(0,+\infty]$. Combining the latter arguments with \eqref{eq:L(z)}, we conclude that $\lim_{n\to+\infty}(f^n(i)/\alpha^n) \in \H\cup\{\infty\}$. Setting $L(i)$ this limit, it is evident that $|L(i)|\in(0,+\infty]$. But, notice that for $z\in\H$
$$\dfrac{f^n(z)}{\alpha^n} = \dfrac{f^n(z)}{|f^n(i)|}\dfrac{|f^n(i)|}{\alpha^n}.$$
Taking limits as $n\to+\infty$, in view of Theorem \ref{thm:Valiron} we obtain
$$L(z)\vcentcolon =\lim_{n\to+\infty}\dfrac{f^n(z)}{\alpha^n}=h(z)|L(i)|\in\H\cup\{\infty\},$$
where $h$ is the Koenigs function of $f$, as defined in Theorem \ref{thm:Valiron}. It follows that $L(z)$ is infinite if and only if $L(i)$ is, something that implies the last part of our statement.
\end{proof}
\end{proposition}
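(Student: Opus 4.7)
The plan is to handle the base point $z=i$ by an explicit decomposition and then transfer the conclusion to arbitrary $z\in\H$ through Valiron's linearization. Writing $f^n(i)=x_n+iy_n$, I would work from the identity
$$\frac{f^n(i)}{\alpha^n}=\frac{y_n}{\alpha^n}\left(\frac{x_n}{y_n}+i\right),$$
which separates the angular information from the modulus. Since $f$ is hyperbolic, Valiron's theorem guarantees that $\arg(f^n(i))\to\theta\in(0,\pi)$, so $x_n/y_n\to\cot\theta\in\R$ and the bracketed factor converges to a nonzero element of $\H$. The remaining scalar $y_n/\alpha^n$ is monotone: the Julia Lemma at the Denjoy--Wolff point $\infty$ shows that $z\mapsto f(z)-\alpha z$ is itself a self-map of $\H$, so $y_{n+1}\ge \alpha y_n$, and consequently $\{y_n/\alpha^n\}$ is non-decreasing with limit in $[y_0,+\infty]$. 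Multiplying the two factors gives $L(i)\in\H\cup\{\infty\}$ with $|L(i)|>0$.

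To pass to a general $z\in\H$, I would factor
$$\frac{f^n(z)}{\alpha^n}=\frac{f^n(z)}{|f^n(i)|}\cdot\frac{|f^n(i)|}{\alpha^n}$$
and invoke Theorem \ref{thm:Valiron}: the first factor converges locally uniformly to the Koenigs function $h(z)\in\H$ (in particular $h(z)\ne 0$), while the second factor converges to $|L(i)|\in(0,+\infty]$. Therefore $L(z)=h(z)|L(i)|$, which lies in $\H$ exactly when $L(i)$ is finite and equals $\infty$ exactly when $L(i)=\infty$. Since the dichotomy is governed solely by $|L(i)|$ and not by $z$, the last assertion in the statement is immediate.

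I expect the only subtle point to be the justification of the monotonicity step, namely the inequality $\mathrm{Im}(f(z))\ge \alpha\,\mathrm{Im}(z)$ that underlies $y_{n+1}\ge \alpha y_n$. This follows from the Julia Lemma applied at the boundary fixed point $\infty$, which is standard but is really the engine of the argument; everything else is a straightforward combination of Valiron's non-tangential convergence and the existence of the Koenigs function.
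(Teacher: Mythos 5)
Your proposal is correct and follows essentially the same route as the paper: the same decomposition at $z=i$ separating angle from modulus, the same use of Valiron for the definite angle, the same Julia Lemma argument for the monotonicity of $y_n/\alpha^n$, and the same transfer to general $z$ via the factorization $\frac{f^n(z)}{\alpha^n}=\frac{f^n(z)}{|f^n(i)|}\cdot\frac{|f^n(i)|}{\alpha^n}$ and Theorem \ref{thm:Valiron}.
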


\begin{remark}
\label{rem:argument}
In the proof, we saw that $L(z)=h(z)|L(i)|$. Consequently, whenever $L(i)\ne\infty$, the function $z\mapsto L(z)$ is non-constant and holomorphic due to the nature of the Koenigs map $h$. In addition, in such a case, the angle by which each orbit converges to infinity may be directly calculated through Proposition \ref{prop:hyperbolic-rate-no-z}. Indeed, $\lim_{n\to+\infty}\arg(f^n(z))=\arg(L(z))=\arg(h(z))$, for all $z\in\H$. This equality also follows from the semi-conformality of $h$ at infinity in the hyperbolic case; see \cite[Eq. (2.8) and Theorem 3]{PommerenkeHalfPlane} or \cite[pp. 14 and 15]{BracciCorradiniValiron}.
\end{remark}

In this article we are interested in those hyperbolic maps for which the slowest possible rate is attained. This is why, using Proposition \ref{prop:hyperbolic-rate-no-z}, we introduce the following well-defined notion.
\begin{definition}
\label{def:extremal hyp}
Let $f \vcentcolon \H \to \H$ be hyperbolic with Denjoy--Wolff point infinity, and set $\alpha = f'(\infty) > 1$. We say that $f$ is \textit{of extremal rate} whenever
$$\lim_{n \to +\infty}\dfrac{f^n(z)}{\alpha^n} \in \H$$
for some (and hence all) $z \in \H$.
\end{definition}

Leaving hyperbolic functions aside, we move on to the parabolic case. For holomorphic self-maps of $\H$ with Denjoy--Wolff point infinity that are parabolic of positive hyperbolic step, a universal lower bound for the divergence to infinity does not exist explicitly in the literature. By \cite[Remark 7.3]{BCDM-Rate}, we know that if $f$ is as above and in addition univalent, the rate is at least linear. In other words, for each $z\in\H$, there exists a constant $c$ depending on $z$ such that $|f^n(z)|\ge cn$, for all $n\in\N$. Observing the proof, it is easy to see that the assumption of univalence may be dropped and thus the rate is at least linear in general, something that is probably known to experts of the field. Nevertheless, our next result, which complements \cite[Corollary 1.6]{CZ-FiniteShift}, provides an even stronger grasp on the rhythm of the convergence.

\begin{proposition}
\label{prop:phs}
Let $f \vcentcolon \H \to \H$ be parabolic of positive hyperbolic step with Denjoy--Wolff point infinity. For each $z \in \H$ we have
$$L(z) \vcentcolon = \lim_{n \to +\infty}\dfrac{f^n(z)}{n} \in (\R \setminus \{0\}) \cup \{\infty\}$$
and the limit is independent of $z$. Moreover, $f$ is of finite shift if and only if $L(z) \in \R \setminus \{0\}$ for some (hence all) $z \in \H$.
\begin{proof}
First, assume that $f$ is of finite shift. Then, by \cite[Corollary 1.6]{CZ-FiniteShift}, there exists $C = C(f) \in \R \setminus \{0\}$, such that
$$\lim_{n \to +\infty}\dfrac{f^n(z)}{n} = C, \quad\text{for all }z\in\H.$$
Conversely, assume now that $f$ is of infinite shift and fix $z \in \H$. Set $z_n = f^n(z)$ and write $x_n = \mathrm{Re}(z_n)$ and $y_n = \mathrm{Im}(z_n)$, $n \in \N$. From Theorem \ref{thm:Koenigs-P}(a) we know that
$$b = \lim_{n \to +\infty}\dfrac{x_{n+1}-x_n}{y_n} \in \R \setminus \{0\},$$
due to the fact that $f$ is of positive hyperbolic step. Assume that $b > 0$ (the arguments for the other case follow in similar fashion). Since $\lim_{n \to +\infty}y_n = +\infty$, for every $C > 0$ we may find $N \in \N$ such that $x_{n+1}-x_n \geq C$ for all $n \geq N$. Applying this inequality inductively, we obtain $x_n \geq C(n-N)+x_N$, for all $n \geq N$. Therefore,
$$\dfrac{\abs{f^n(z)}}{n} \geq \dfrac{x_n}{n} \geq \ C \dfrac{n-N}{n}+\dfrac{x_N}{n}.$$
Taking limits as $n \to +\infty$ we infer that
$$\liminf_{n \to +\infty}\dfrac{\abs{f^n(z)}}{n} \geq C.$$
Since $C > 0$ is arbitrary, we conclude that
$$\lim_{n \to +\infty}\dfrac{\abs{f^n(z)}}{n} = +\infty.$$
Since every non-elliptic map $f\vcentcolon\H\to\H$ is either of finite shift or of infinite shift, this dichotomy leads to the stated equivalence and the independence of the limit $L(z)$ with respect to $z \in \H$. 
\end{proof}
\end{proposition}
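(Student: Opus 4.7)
The natural approach is to split according to the dichotomy ``finite shift versus infinite shift,'' which, as recalled at the end of Subsection \ref{subsec:discrete-dynamics}, is exhaustive for non-elliptic maps. Each branch will contribute one direction of the final equivalence and simultaneously provide the value of $L(z)$ and its independence from $z$.

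In the finite shift case, I would not prove anything new from scratch: the already cited \cite[Corollary 1.6]{CZ-FiniteShift} gives at once the existence of a constant $C = C(f) \in \R \setminus \{0\}$, independent of $z$, such that $f^n(z)/n \to C$ for every $z \in \H$. This takes care of the ``only if'' implication and, on this branch, places $L(z)$ in $\R \setminus \{0\}$.

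In the infinite shift case, writing $z_n = f^n(z) = x_n + iy_n$, the Julia Lemma forces $\{y_n\}$ to be non-decreasing, so by definition of infinite shift $y_n \to +\infty$. I would then appeal to Theorem \ref{thm:Koenigs-P}(a), which produces
$$b = \lim_{n \to +\infty} \frac{x_{n+1}-x_n}{y_n} \in \R,$$
and which is nonzero precisely because $f$ has positive hyperbolic step. Assume without loss of generality $b > 0$ (the other case is symmetric). Since $y_n \to +\infty$ and the ratio tends to a positive constant, for every $C > 0$ the difference $x_{n+1}-x_n$ eventually exceeds $C$. Telescoping then yields $x_n \geq C(n - N) + x_N$ for $n$ large, so $\liminf_{n\to+\infty} x_n/n \geq C$; since $C$ is arbitrary, $|f^n(z)|/n \geq x_n/n \to +\infty$, and hence $f^n(z)/n \to \infty$ in the Riemann sphere. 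This value $\infty$ is trivially independent of $z$.

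Combining the two cases gives both the claimed membership $L(z) \in (\R \setminus \{0\}) \cup \{\infty\}$, the independence of $L(z)$ from $z$, and the stated characterization of finite shift as the case $L(z) \in \R \setminus \{0\}$. The main obstacle, if any, is the passage in the infinite shift branch from the Pommerenke limit $b$, which only controls $(x_{n+1}-x_n)/y_n$, to a super-linear lower bound on $x_n$ itself; this is precisely where the unboundedness of $\{y_n\}$ provided by infinite shift enters crucially, turning a merely positive ratio into arbitrarily large increments.
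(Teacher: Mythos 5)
Your proposal follows the paper's proof essentially verbatim: the same finite-shift/infinite-shift dichotomy, the same citation of \cite[Corollary 1.6]{CZ-FiniteShift} for the finite-shift direction, and the same use of Pommerenke's limit $b$ from Theorem \ref{thm:Koenigs-P}(a) together with the telescoping lower bound $x_n \geq C(n-N)+x_N$ in the infinite-shift case. The only (immaterial) difference is that you spell out why $y_n \to +\infty$ via the Julia Lemma, a step the paper leaves implicit.
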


Clearly, the last proposition allows for the following notion to be well-defined:
\begin{definition}
Let $f\vcentcolon\H\to \H$ be parabolic of positive hyperbolic step with Denjoy--Wolff point infinity. We say that $f$ is \textit{of extremal rate} whenever
\begin{equation*}
\lim\limits_{n\to+\infty}\frac{f^n(z)}{n}\in \R\setminus\{0\}
\end{equation*}
for some (and hence all) $z\in\H$.
\end{definition}
As a result, in the case of parabolic self-maps with positive hyperbolic step, Proposition \ref{prop:phs} clarifies that self-maps with an extremal rate of convergence towards the Denjoy--Wolff point coincide with those of finite shift.

There are several recent contributions dealing with this particular subclass of parabolic self-maps of positive hyperbolic step. In particular, parabolic self-maps of finite shift present concrete relations with their Herglotz representation \cite[Theorem 1.5]{CZ-FiniteShift} or the conformality of the respective Koenigs map at the Denjoy--Wolff point \cite[Theorem 4.1]{CDP-C2}. These connections serve as a motivation for our own research. More information about these topics are given in subsequent sections, where we also prove analogous results for hyperbolic self-maps of extremal rate.

\begin{remark}
It would be desirable to introduce a similar notion for parabolic self-maps of zero hyperbolic step. However, no universal lower bound with respect to $n\in\N$ is known for the quantity $\lvert f^n(z)\rvert$, $z\in\H$, in this case. A natural conjecture would be the satisfaction of $\liminf_{n\to+\infty}(\lvert f^n(z)\rvert/\sqrt{n})>0$, for all $z\in\H$. This thought is derived from the analogue result in the continuous setting \cite[Theorem 1.(c)]{DB-Rate-P}. Nevertheless, the techniques used in the proof of that result are difficult to adapt to the discrete setting since the image of the corresponding Koenigs map may have polar boundary \cite[Theorems 8.1 and 8.2]{CDMP-Zoo}.

The corresponding research for continuous semigroups of extremal rate will be conducted in \cite{WIP}.
\end{remark}

\section{Herglotz representation}
\label{sec:Herglotz}
We begin our examination by attempting to tie the extremal rate with the \textit{Herglotz representation} of holomorphic self-maps of the upper half-plane. A seminal result due to Herglotz \cite[Theorem 6.2.1]{Aaronson} certifies that every holomorphic function $f \vcentcolon \H \to \H$ can be uniquely written in the form
\begin{equation}
\label{eq:Herglotz}
f(z) = \alpha z + \beta + \int_{\R}\dfrac{1+tz}{t-z}d\mu(t), \quad z \in \H,
\end{equation}
where $\alpha \geq 0$, $\beta \in \R$ and $\mu$ is a positive finite measure on $\R$. In particular, some of the parameters in the representation above may be directly computed from the self-map. More specifically,
\begin{equation}
\label{eq:coefficients}
\alpha = \angle\lim_{z \to \infty}\dfrac{f(z)}{z} = f'(\infty), \qquad \beta = \mathrm{Re}(f(i)).
\end{equation}

With regard to our study, we see that $f$ is non-elliptic with Denjoy--Wolff point infinity if and only if $\alpha \geq 1$. In such a case, $f$ is hyperbolic if and only if $\alpha > 1$.

For the sake of brevity, we introduce the following notation.
\begin{definition}
We say that $f \vcentcolon \H \to \H$ is \textit{represented} by the triplet $(\alpha,\beta,\mu)$ if \eqref{eq:Herglotz} holds.
\end{definition}

In \cite[Theorem 1.5]{CZ-FiniteShift}, a relation between self-maps of finite shift (which coincide with the parabolic self-maps of positive hyperbolic step with an extremal rate of convergence; see Proposition \ref{prop:phs}) and the Herglotz representation was found. In this section we take these ideas to hyperbolic self-maps. We commence with some auxiliary lemmas.

\begin{lemma}
\label{lemma:non-tangential}
Let $a > 0$. There exists $C \vcentcolon = C(a) \ge 1 $ such that, for all $t \in \R$ and all $z \in \H$ with $\abs{\mathrm{Re}(z)} \leq a\mathrm{Im}(z)$, it holds that
\begin{equation*}
   \dfrac{1}{C}(t^2+\mathrm{Im}(z)^2) \leq \abs{t-z}^2 \leq C(t^2+\mathrm{Im}(z)^2). 
\end{equation*}
\begin{proof}
Fix $z\in\H$ with $|\mathrm{Re}(z)| \leq a \mathrm{Im}(z)$. For the sake of simplicity, set $x=\textup{Re}(z)$ and $y=\textup{Im}(z)$. We start with the right-hand side of the desired inequality. For $t \in \R$ we have
\begin{align}
\label{eq:non-tangential lemma, right-hand}
\abs{t-z}^2 & = (t-x)^2+y^2 \leq (|t|+|x|)^2+y^2 \\
& \leq  t^2+2a|t|y+a^2y^2+y^2 \leq  t^2+a(t^2+y^2)+(a^2+1)y^2 \notag \\
& \leq (a^2+a+1)(t^2+y^2), \notag
\end{align}
where we have consecutively made use of the triangle inequality and then of the trivial inequality $2|t|y\le t^2+y^2$.

We will now turn to the left-hand side of the intended outcome. Write
$$b = \dfrac{a^2+2-a\sqrt{a^2+4}}{2}.$$
It is easy to check that $0<b<1$ and that $\frac{1-b}{\sqrt{b}}=a$. But we know that $|x|<ay$ which implies $x^2<\frac{(1-b)^2}{b}y^2$. Through some straightforward rearranging, we are led to 
$$-\frac{bx^2}{1-b}+(1-b)y^2\ge0,$$
and adding a non-negative factor, we trivially deduce that
$$\left(\sqrt{1-b}t-\frac{x}{\sqrt{1-b}}\right)^2-\frac{bx^2}{1-b}+(1-b)y^2\ge0.$$
Doing certain simple computations, the last inequality directly leads to
\begin{equation}
\label{eq:non-tangential lemma, left-hand}
(t-x)^2+y^2\ge b(t^2+y^2).
\end{equation}
By combining relation \eqref{eq:non-tangential lemma, right-hand} with relation \eqref{eq:non-tangential lemma, left-hand} and by setting $C=\max\{b,a^2+a+1\}$, the result is obtained.
\end{proof}
\end{lemma}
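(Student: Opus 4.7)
The plan is to write $z = x + iy$ with $y = \mathrm{Im}(z) > 0$, so the hypothesis becomes $|x| \le ay$ and the quantity of interest factors as $|t-z|^2 = (t-x)^2 + y^2$. The lemma then amounts to showing that $(t-x)^2 + y^2$ is comparable to $t^2 + y^2$ with constants depending only on $a$.

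For the upper bound I would apply the triangle inequality combined with $|x| \le ay$: expanding $(t-x)^2 \le (|t|+|x|)^2 \le 2t^2 + 2x^2 \le 2t^2 + 2a^2 y^2$ and adding $y^2$ yields $|t-z|^2 \le 2(a^2+1)(t^2+y^2)$. This is the easy direction.

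The lower bound is where the non-tangential hypothesis enters substantively, and I would treat it by a dichotomy on the size of $|t|$ relative to $y$. If $|t| \le 2ay$, then $t^2 + y^2 \le (4a^2+1)y^2 \le (4a^2+1)|t-z|^2$, so the bound is immediate. If $|t| > 2ay$, the reverse triangle inequality gives $|t-x| \ge |t|-|x| \ge |t| - ay \ge |t|/2$, so $(t-x)^2 \ge t^2/4$; since moreover $y^2 < t^2/(4a^2)$ in this regime, one has $t^2+y^2 < t^2\cdot \tfrac{4a^2+1}{4a^2}$, and comparing with $|t-z|^2 \ge t^2/4$ yields the desired estimate. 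Taking $C$ to be the maximum of the constants from the two cases and the upper bound gives a single $C = C(a)$ that works.

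The only potential obstacle is organizing the case split so that the constants are uniform and genuinely depend only on $a$; a more elegant alternative, which avoids the dichotomy, is to view $(1-c)(t^2+y^2) - (t-x)^2 - y^2$ as a quadratic in $t$ and choose $c\in(0,1)$ so that its discriminant is non-positive under the constraint $x^2 \le a^2 y^2$. Solving the resulting inequality $c^2 - (2+a^2)c + 1 \ge 0$ produces the explicit optimal constant $c = \tfrac{1}{2}(a^2+2 - a\sqrt{a^2+4})$, recovering the sharp form of the lower estimate.
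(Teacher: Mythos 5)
Your proposal is correct. Your primary route — the dichotomy on $|t|$ versus $y$ — differs from the paper's argument: you split into the regime $|t|\le 2ay$, where $y$ dominates and $|t-z|^2\ge y^2$ suffices, and the regime $|t|>2ay$, where the reverse triangle inequality gives $|t-x|\ge|t|/2$ so that $t^2$ dominates on both sides. This is a more elementary and more robust style of argument (one could replace the threshold $2a$ by any constant greater than $a$), at the cost of producing a constant $\max\{4a^2+1,\,(4a^2+1)/a^2\}$ rather than the optimal one. The paper instead treats $(t-x)^2+y^2-b(t^2+y^2)$ as a quadratic in $t$ and chooses $b$ so that, under the constraint $x^2\le a^2y^2$, the discriminant is nonpositive; solving $b^2-(2+a^2)b+1\ge 0$ yields the explicit sharp constant $b=\tfrac{1}{2}\bigl(a^2+2-a\sqrt{a^2+4}\bigr)$. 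Your closing remark reproduces precisely this computation, so in effect you have given both the paper's proof and an independent, simpler one; either is perfectly adequate since the lemma only requires some $C=C(a)$, not the sharp one.
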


\begin{lemma}
\label{lemma:estimate-sum}
Let $A > 0$, $\alpha > 1$, and $t \in \R \setminus \{0\}$. Then,
$$0 \leq \sum_{n = 0}^{+\infty}\dfrac{1}{t^2+A^2\alpha^{2n}} - \dfrac{\log(t^2+A^2)-\log(A^2)}{2t^2\log\alpha} \leq \dfrac{1}{t^2+A^2}.$$
\begin{proof}
Fix $t\in\R\setminus\{0\}$ and consider the real function $g:[0,+\infty)\to\R$ with $g(x)=1/(t^2+A^2\alpha^{2x})$. Since $\alpha>1$, the function $g$ is strictly decreasing in $[0,+\infty)$. Evidently, $g$ is also positive on $[0,+\infty)$. Therefore, we know that
\begin{equation}
\label{eq:integral lemma}
\int_{0}^{+\infty}g(x)dx \leq \sum\limits_{n=0}^{+\infty}g(n) \leq g(0)+\int_{0}^{+\infty}g(x)dx.
\end{equation}
We will explicitly calculate the integral in the relation above. Using the substitution $\alpha^{2x}=u$, we may write
\begin{equation*}
\int_{0}^{+\infty}g(x)dx = \frac{1}{2t^2\log \alpha}\int_{1}^{+\infty}\frac{\frac{t^2}{A^2}}{\frac{t^2}{A^2}+u}\frac{du}{u} = \frac{1}{2t^2\log \alpha}\log\left(\frac{t^2}{A^2}+1\right).
\end{equation*}
Keeping in mind that $g(0)=1/(t^2+A^2)$, inequality \eqref{eq:integral lemma} provides the desired result at once.
\end{proof}
\end{lemma}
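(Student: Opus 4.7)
The plan is to recognize the sum as a Riemann-type approximation to an explicit integral, and then apply the standard integral test to sandwich it. Concretely, I would define the function
$$g(x) = \frac{1}{t^2 + A^2 \alpha^{2x}}, \qquad x \in [0,+\infty).$$
Since $\alpha > 1$ and $A^2\alpha^{2x}$ is strictly increasing in $x$, the function $g$ is continuous, positive, and strictly decreasing on $[0,+\infty)$. The standard integral comparison for decreasing positive functions then yields
$$\int_0^{+\infty} g(x)\,dx \;\leq\; \sum_{n=0}^{+\infty} g(n) \;\leq\; g(0) + \int_0^{+\infty} g(x)\,dx,$$
which is exactly the sandwich we want, provided the integral matches the logarithmic quantity in the statement and $g(0)$ matches $1/(t^2+A^2)$.

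The second step is to evaluate the integral explicitly. I would substitute $u = \alpha^{2x}$, so that $du = 2(\log\alpha)\alpha^{2x}dx$ and $x=0$ corresponds to $u=1$. This converts the integral into a rational integral in $u$ from $1$ to $+\infty$, which after partial fractions (or just the trivial identity $\frac{1}{u(t^2+A^2 u)} = \frac{1}{t^2}\bigl(\frac{1}{u}-\frac{A^2}{t^2+A^2 u}\bigr)$) gives an antiderivative of the form $\log\bigl(\frac{t^2+A^2 u}{u}\bigr)$ up to a constant. Evaluating between $1$ and $+\infty$, the boundary term at $+\infty$ vanishes and one is left with
$$\int_0^{+\infty} g(x)\,dx \;=\; \frac{1}{2t^2\log\alpha}\,\log\!\left(\frac{t^2+A^2}{A^2}\right) \;=\; \frac{\log(t^2+A^2) - \log(A^2)}{2t^2\log\alpha}.$$

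Finally, since $g(0) = 1/(t^2+A^2)$, plugging the evaluated integral into the sandwich inequality immediately gives the two-sided bound in the statement. I do not foresee a genuine obstacle here: the argument is essentially the integral test combined with one substitution, and the main thing to watch is bookkeeping the factor $1/(2t^2\log\alpha)$ correctly and ensuring the upper limit in $u$ really contributes zero (which requires $t\neq 0$, as assumed). Because $t\in\R\setminus\{0\}$ is fixed throughout, the logarithmic difference makes sense and the whole computation is uniform in the parameters.
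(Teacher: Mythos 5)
Your proof is correct and takes essentially the same approach as the paper: define $g(x) = 1/(t^2+A^2\alpha^{2x})$, apply the integral test for decreasing positive functions, and evaluate the resulting integral via the substitution $u = \alpha^{2x}$. The bookkeeping of the factor $1/(2t^2\log\alpha)$ and the observation that $g(0)=1/(t^2+A^2)$ match the paper exactly.
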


With the last two lemmas in hand, we are ready for our main theorem of the section.

\begin{theorem}
\label{thm:H-Herglotz}
Let $f \vcentcolon \H \to \H$ be hyperbolic with Denjoy--Wolff point infinity and represented by the triplet $(\alpha,\beta,\mu)$. Then, $f$ is of extremal rate if and only if
$$\int_{\R}\log(1+\abs{t})d\mu(t) < +\infty.$$
\begin{proof}
Since $f$ is a priori hyperbolic, by \eqref{eq:coefficients} we deduce that $\alpha = f'(\infty) > 1$.

Choose $z_0 \in \H$ and let $z_n = f^n(z_0) = x_n + iy_n$, $n \in \N$, be its orbit under $f$.
Recall that $\lim_{n \to +\infty} (x_n/y_n) \in \R$ (see Subsection \ref{subsec:discrete-dynamics}) and thus $\lim_{n\to+\infty} (f^n(z_0)/y_n)\in \H$. Then, due to Definition \ref{def:extremal hyp}, $f$ is of extremal rate if and only if $\lim_{n \to +\infty}(y_n/\alpha^n) \in (0,+\infty)$. However, using \eqref{eq:Herglotz}, we notice that
$$\dfrac{y_n}{\alpha^ny_0} = \prod_{k = 0}^{n-1}\dfrac{y_{k+1}}{\alpha y_k} = \prod_{k = 0}^{n-1}\left(1+\dfrac{1}{\alpha}\int_{\R}\dfrac{1+t^2}{\abs{t-z_n}^2}d\mu(t)\right).$$
Therefore, correlating the product above with the respective sum, we deduce that $f$ is of extremal rate if and only if
\begin{equation}\label{eq:fubini}
\sum_{n = 0}^{+\infty}\int_{\R}\dfrac{1+t^2}{\abs{t-z_n}^2}d\mu(t) = \int_{\R}\left((1+t^2)\sum_{n = 0}^{+\infty}\dfrac{1}{\abs{t-z_n}^2}\right)d\mu(t)< +\infty,
\end{equation}
where we have used Fubini's Theorem. Moreover, by Lemma \ref{lemma:non-tangential}, relation \eqref{eq:fubini} is equivalent to
\begin{equation}\label{eq:fubini2}
\int_{\R}\left((1+t^2)\sum_{n = 0}^{+\infty}\dfrac{1}{t^2+y_n^2}\right)d\mu(t) < +\infty.
\end{equation}
Recall that using the upper half-plane version of the Julia Lemma, it is possible to find $C > 0$ so that
$$\alpha^ny_0 \leq y_n \leq C(\alpha+1)^n.$$
Therefore, it follows from Lemma \ref{lemma:estimate-sum} that there exists $K = K(y_0,\alpha) > 1$ with
$$\dfrac{1}{K} \dfrac{\log(1+\abs{t})}{t^2} \leq \sum_{n = 0}^{+\infty}\dfrac{1}{t^2+y_n^2} \leq K \dfrac{\log(1+\abs{t})}{t^2}, \qquad t \geq 1.$$
Returning back to \eqref{eq:fubini2}, the desired equivalence follows.
\end{proof}
\end{theorem}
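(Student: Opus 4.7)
The plan is to express the extremal-rate condition purely in terms of $y_n=\mathrm{Im}(f^n(z_0))$, expand it using the Herglotz formula, and then estimate the resulting double sum with the help of Lemmas \ref{lemma:non-tangential} and \ref{lemma:estimate-sum}.

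First, fix $z_0\in\H$ and set $z_n=x_n+iy_n=f^n(z_0)$. Since $f$ is hyperbolic, orbits converge to infinity with a definite non-tangential slope, so $x_n/y_n$ has a finite real limit and hence $f^n(z_0)/\alpha^n\in\H$ if and only if $y_n/\alpha^n$ tends to a finite positive number. Taking imaginary parts in \eqref{eq:Herglotz} at $z=z_n$ gives
\begin{equation*}
\frac{y_{n+1}}{\alpha y_n}=1+\frac{1}{\alpha}\int_{\R}\frac{1+t^2}{|t-z_n|^2}\,d\mu(t)\ge 1,
\end{equation*}
so $y_n/(y_0\alpha^n)=\prod_{k=0}^{n-1}y_{k+1}/(\alpha y_k)$ is a nondecreasing product of terms $\ge 1$, which converges if and only if the positive series
\begin{equation*}
S\vcentcolon=\sum_{n=0}^{+\infty}\int_{\R}\frac{1+t^2}{|t-z_n|^2}\,d\mu(t)
\end{equation*}
is finite. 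Tonelli's theorem then allows me to rewrite $S=\int_{\R}(1+t^2)\sum_{n=0}^{+\infty}|t-z_n|^{-2}\,d\mu(t)$, moving the real work inside the integral.

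Next I would remove the dependence on $x_n$ from the integrand: uniform non-tangentiality yields $|x_n|\le ay_n$ for a fixed $a>0$, and Lemma \ref{lemma:non-tangential} then gives $|t-z_n|^2\asymp t^2+y_n^2$ with constants independent of $n$ and $t$. For the sequence $\{y_n\}$ itself, the Julia Lemma supplies the lower bound $y_n\ge y_0\alpha^n$, while a standard exponential upper bound $y_n\le C\gamma^n$ with some $\gamma>1$ (obtainable, for instance, by observing that the Herglotz integrand is uniformly bounded along the orbit once $y_n$ is large, or via the Schwarz--Pick estimate $d_{\H}(z_{n+1},z_n)\le d_{\H}(z_1,z_0)$) controls $y_n$ from above. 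Applying Lemma \ref{lemma:estimate-sum} twice, with the two bases $\alpha$ and $\gamma$, then produces a constant $K\ge 1$ such that
\begin{equation*}
\frac{1}{K}\,\frac{\log(1+|t|)}{t^2}\le\sum_{n=0}^{+\infty}\frac{1}{t^2+y_n^2}\le K\,\frac{\log(1+|t|)}{t^2},\qquad |t|\ge 1.
\end{equation*}

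Finally, multiplying by $1+t^2$ and splitting the integral into the regions $|t|\le 1$ (where the integrand is bounded and hence harmless, as $\mu$ is finite) and $|t|\ge 1$, the finiteness of $S$ becomes equivalent to $\int_{\R}\log(1+|t|)\,d\mu(t)<+\infty$. The delicate step I expect is precisely the two-sided comparison $\sum_n(t^2+y_n^2)^{-1}\asymp \log(1+|t|)/t^2$: once $y_n$ is trapped between two exponentials in $n$, Lemma \ref{lemma:estimate-sum} delivers the estimate cleanly, but one has to verify both exponential bounds on $y_n$ and keep track of the constants so that the comparability is uniform for $|t|\ge 1$. Everything else is routine bookkeeping.
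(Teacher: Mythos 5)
Your proposal is correct and follows essentially the same route as the paper's proof: reduce extremality to the convergence of the infinite product for $y_n/(\alpha^n y_0)$, pass to the associated series, interchange sum and integral (the paper invokes Fubini, you invoke Tonelli, which is if anything the cleaner justification since all terms are nonnegative), replace $|t-z_n|^2$ by $t^2+y_n^2$ via Lemma \ref{lemma:non-tangential}, trap $y_n$ between two exponentials, and finish with a two-sided application of Lemma \ref{lemma:estimate-sum}. The only cosmetic difference is the source of the exponential upper bound on $y_n$: the paper states $y_n\le C(\alpha+1)^n$ and attributes it to the Julia Lemma, while you propose deriving $y_n\le C\gamma^n$ either from the boundedness of the Herglotz integrand along the orbit or from the Schwarz--Pick contraction; both are valid and interchangeable here, and your observation that Lemma \ref{lemma:estimate-sum} must be applied with the two bases separately to get the two-sided comparison is exactly the correct bookkeeping.
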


Theorem \ref{thm:H-Herglotz} provides a characterization, with regard to the Herglotz representation, for whenever $|f^n|$ asymptotically behaves like $\alpha^n$, as $n\to+\infty$. Naturally, one may wonder whether this result may be generalized to connect the behaviour of $|f^n|$ with the corresponding Herglotz representation even when $\lim_{n\to+\infty}(|f^n(z)|/\alpha^n)=+\infty$, for all $z\in\H$. So, we aim to discover a measure of the divergence of the above limit. Towards this goal, we begin by developing some handy estimates.

\begin{lemma}
\label{lemma:normalization}

Consider the function
$$F(\alpha,n,y,t) = \log\left(\dfrac{\alpha^{2n}(t^2+y^2)}{t^2+\alpha^{2n}y^2}\right),$$
where $\alpha \in (1,+\infty)$, $n \in \N$, $y\in(0,+\infty)$, and $t \in \R$. For $\epsilon\in(0,+\infty)$, we have
\begin{equation}
\label{eq:normalization 0}
\dfrac{1}{\max\{y^2,1\}} \leq \dfrac{F(\alpha+\epsilon,n,y,t)}{F(\alpha,n,1,t)} \leq \dfrac{1}{\min\{y^2,1\}}\dfrac{\log(\alpha+\epsilon)}{\log(\alpha)}.
\end{equation}
\begin{proof}
Consider $G_1(\alpha,n,y,t,\epsilon) = F(\alpha+\epsilon,n,y,t)/F(\alpha,n,y,t)$. It can be readily checked that $\frac{\partial F(\alpha,n,y,t)}{\partial\alpha}=\frac{2nt^2}{\alpha(t^2+\alpha^{2n}y^2)}>0$ and thus $F$ is an increasing function of $\alpha\in(1,+\infty)$. Ergo $G_1\ge 1$. On the other hand, it can be checked that $G_1$ is an increasing function of $\abs{t}$. We omit the explicit calculations for the sake of increasing readability. Then,
\begin{equation}\label{eq:normalization 1}
G_1(\alpha,n,y,t,\epsilon) \leq \lim_{s \to \pm\infty}G_1(\alpha,n,y,s,\epsilon) = \dfrac{\log(\alpha+\epsilon)}{\log(\alpha)},
\end{equation}
for all $t\in\R$. Next, suppose that $y > 1$ and consider $G_2(\alpha,n,y,t) = F(\alpha,n,y,t)/F(\alpha,n,1,t)$. Once more, $G_2$ is an increasing function of $\abs{t}$. Since for each $y\in(0,+\infty)$
$$\lim_{t \to 0}\dfrac{F(\alpha,n,y,t)}{t^2}=\dfrac{\alpha^{2n}-1}{\alpha^{2n}y^2},$$
we conclude that
\begin{equation}\label{eq:normalization 2}
G_2(\alpha,n,y,t) \geq \lim_{s \to 0}G_2(\alpha,n,y,s)=\lim\limits_{s\to0}\left(\dfrac{F(\alpha,n,y,s)s^2}{F(\alpha,n,1,s)s^2}\right)=\dfrac{1}{y^2}.
\end{equation}
On the other side,
\begin{equation}\label{eq:normalization 3}
G_2(\alpha,n,y,t) \leq \lim_{s \to \pm\infty}G_2(\alpha,n,y,s)=1.
\end{equation}
Combining relations \eqref{eq:normalization 1}, \eqref{eq:normalization 2} and \eqref{eq:normalization 3} along with the fact that $G_1\ge1$ provides \eqref{eq:normalization 0}, for $y>1$. If $0 < y < 1$, a similar reasoning applies, although $G_2$ is now decreasing with respect to $\abs{t}$. Finally, for $y=1$ the result can be derived directly using only the results on $G_1$.
\end{proof}
\end{lemma}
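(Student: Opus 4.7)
The plan is to decompose the ratio into two factors, each isolating the variation of a single parameter, and then bound each factor by finding its extrema as a function of $|t|$ over $[0,+\infty)$. Concretely, I would write
\begin{equation*}
\frac{F(\alpha+\epsilon,n,y,t)}{F(\alpha,n,1,t)} = \frac{F(\alpha+\epsilon,n,y,t)}{F(\alpha,n,y,t)} \cdot \frac{F(\alpha,n,y,t)}{F(\alpha,n,1,t)},
\end{equation*}
and call these factors $G_1$ (which perturbs only the $\alpha$ slot) and $G_2$ (which perturbs only the $y$ slot). The identity $\alpha^{2n}(t^2+y^2)-(t^2+\alpha^{2n}y^2) = t^2(\alpha^{2n}-1)$ shows that $F(\alpha,n,y,t) > 0$ whenever $\alpha > 1$ and $t \neq 0$, so both quotients are well-defined and positive; the values at $t = 0$ are treated by passing to limits.

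For $G_1$, the lower bound $G_1 \geq 1$ drops out of $\partial_\alpha F(\alpha,n,y,t) = 2nt^2/[\alpha(t^2+\alpha^{2n}y^2)] \geq 0$, which shows that $F$ is non-decreasing in $\alpha$. For the upper bound, I would prove that $G_1$ is monotone in $s = t^2$ (my expectation is non-decreasing in $|t|$) and then read off its supremum as $|t| \to \infty$: the limits $\lim_{|t|\to\infty} F(\alpha,n,y,t) = 2n\log\alpha$ and $\lim_{|t|\to\infty}F(\alpha+\epsilon,n,y,t) = 2n\log(\alpha+\epsilon)$ are immediate from the definition, giving $G_1 \to \log(\alpha+\epsilon)/\log\alpha$.

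For $G_2$ the strategy is the same: establish monotonicity in $|t|$ and compute the boundary values. Rewriting $F = \log(1+t^2/y^2) - \log(1+t^2/(\alpha^{2n}y^2))$ and Taylor-expanding at $t = 0$ gives
\begin{equation*}
F(\alpha,n,y,t) = \frac{\alpha^{2n}-1}{\alpha^{2n}y^2}\,t^2 + O(t^4),
\end{equation*}
so $\lim_{t\to 0} G_2 = 1/y^2$, while $\lim_{|t|\to\infty}G_2 = 1$ from the asymptotics above. According as $y > 1$ or $0 < y < 1$, the monotonicity of $G_2$ in $|t|$ reverses, and in either case $G_2$ lies in the closed interval $[\min\{1,1/y^2\},\max\{1,1/y^2\}]$; the case $y = 1$ is trivial since $G_2 \equiv 1$. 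Multiplying the resulting bounds on $G_1$ and $G_2$ produces exactly the double inequality in the statement.

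The main technical obstacle I anticipate is verifying the monotonicity of $G_1$ and $G_2$ in $s = t^2$, since each is a quotient of logarithmic expressions in $s$ whose derivative is not obviously signed. My intended remedy is to clear denominators and reduce the sign of $\partial_s G_i$ to an elementary inequality between explicit rational functions of $s$, exploiting the fact that the numerator and denominator of each $G_i$ vanish to the same order at $s=0$ and share the same growth rate as $s\to+\infty$, so that sign changes can be ruled out between these endpoints. Once the monotonicity is in hand, the remaining limit computations are routine.
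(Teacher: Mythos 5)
Your proposal follows the paper's proof essentially verbatim: the same decomposition $F(\alpha+\epsilon,n,y,t)/F(\alpha,n,1,t) = G_1 \cdot G_2$, the same use of $\partial_\alpha F \geq 0$ to get $G_1 \geq 1$, the same appeal to monotonicity of each factor in $|t|$ (with the actual verification deferred in both cases), and the same endpoint limits at $t \to 0$ and $|t| \to \infty$ to pin down the extremal values. The only difference is cosmetic: you note the algebraic identity $\alpha^{2n}(t^2+y^2) - (t^2+\alpha^{2n}y^2) = t^2(\alpha^{2n}-1)$ to justify positivity of $F$, a small point the paper leaves implicit.
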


We may now present a general estimate of $\abs{f^n(z)}/\alpha^n$ using the Herglotz representation.
\begin{proposition}
\label{prop:Herglotz divergence}
Let $f \vcentcolon \H \to \H$ be hyperbolic with Denjoy--Wolff point infinity and represented by the triplet $(\alpha,\beta,\mu)$. For each $z \in \H$ there exist two positive constants $C_1\vcentcolon =C_1(z)$ and $C_2\vcentcolon =C_2(z)$ such that
$$C_1 \leq \dfrac{\displaystyle\log\left(\frac{\abs{f^{n+1}(z)}}{\alpha^{n+1}}\right)}{\displaystyle\int_{\R}\frac{1+t^2}{t^2}\log\left(\frac{\alpha^{2n}(t^2+1)}{t^2+\alpha^{2n}}\right)d\mu(t)} \leq C_2, \qquad \textup{for all }n \in \N.$$
\begin{proof}
Fix $z\in\H$. First of all, since $f$ is hyperbolic, $\{f^n(z)\}$ converges non-tangentially to infinity and therefore there exists $K > 1$ such that $y_n \leq \abs{f^n(z)} \leq K y_n$, where as usual $y_n\vcentcolon =\mathrm{Im}(f^n(z))$, $n \in \N$, and $y_0=\mathrm{Im}(z)$. Using the Herglotz representation of $f$, we may write
\begin{equation}\label{eq:divergence 0}
\dfrac{y_{n+1}}{\alpha^{n+1}y_0} = \prod_{k = 0}^n \dfrac{y_{k+1}}{\alpha y_k} = \prod_{k = 0}^n\left(1+\dfrac{1}{\alpha}\int_{\R}\dfrac{1+t^2}{\abs{t-f^k(z)}^2}d\mu(t)\right).
\end{equation}
Therefore,
\begin{equation*}
\log\left(\dfrac{y_{n+1}}{\alpha^{n+1}y_0}\right) = \sum_{k = 0}^n\log\left(1+\dfrac{1}{\alpha}\int_{\R}\dfrac{1+t^2}{\abs{t-f^k(z)}^2}d\mu(t)\right).
\end{equation*}

To proceed with the upper bound, we notice that
\begin{align*}
\log\left(1+\dfrac{1}{\alpha}\int_{\R}\dfrac{1+t^2}{\abs{t-f^k(z)}^2}d\mu(t)\right) & \leq \dfrac{1}{\alpha}\int_{\R}\dfrac{1+t^2}{\abs{t-f^k(z)}^2}d\mu(t) \\
& \leq \dfrac{K_1}{\alpha}\int_{\R}\dfrac{1+t^2}{t^2+y_k^2}d\mu(t) \\
& \leq \dfrac{K_1}{\alpha}\int_{\R}\dfrac{1+t^2}{t^2+\alpha^{2k}y_0^2}d\mu(t),
\end{align*}
where we have used Lemma \ref{lemma:non-tangential} and the Julia Lemma. This implies that
\begin{equation}
\label{eq:upper-estimate-1}
\log\left(\dfrac{y_{n+1}}{\alpha^{n+1}y_0}\right) \leq \dfrac{K_1}{\alpha}\int_{\R}\sum_{k = 0}^n\dfrac{1+t^2}{t^2+\alpha^{2k}y_0^2}d\mu(t).
\end{equation}
Arguing as in the proof of Lemma \ref{lemma:estimate-sum}, we can see that
\begin{align*}
\sum_{k = 0}^n\dfrac{1}{t^2+\alpha^{2k}y_0^2} & \leq \dfrac{1}{t^2+y_0^2}+\int_0^n\dfrac{dx}{t^2+\alpha^{2x}y_0^2} \\
& = \dfrac{1}{t^2+y_0^2}+\dfrac{\log\left(\dfrac{\alpha^{2n}(t^2+y_0^2)}{t^2+\alpha^{2n}y_0^2}\right)}{2t^2\log(\alpha)}.
\end{align*}
Using derivatives one can verify that
$$t \mapsto (t^2+y_0^2)\dfrac{\log\left(\dfrac{\alpha^{2n}(t^2+y_0^2)}{t^2+\alpha^{2n}y_0^2}\right)}{2t^2\log(\alpha)}$$
is an increasing function of $\abs{t}$, and 
$$\lim_{t \to 0}(t^2+y_0^2)\dfrac{\log\left(\dfrac{\alpha^{2n}(t^2+y_0^2)}{t^2+\alpha^{2n}y_0^2}\right)}{2t^2\log(\alpha)} = \dfrac{\alpha^{2n}-1}{2\alpha^{2n}\log(\alpha)} \geq \dfrac{\alpha^2-1}{2\alpha^2\log(\alpha)}, \qquad n \geq 1.$$
Thus, there exists $K_2 > 0$, not depending on $n$, such that
$$\dfrac{1}{t^2+y_0^2} \leq K_2\dfrac{\log\left(\dfrac{\alpha^{2n}(t^2+y_0^2)}{t^2+\alpha^{2n}y_0^2}\right)}{2t^2\log(\alpha)}.$$
This means that we can find some $K_3 > 0$ such that
$$\sum_{k = 0}^n\dfrac{1}{t^2+\alpha^{2k}y_0^2} \leq K_3\dfrac{\log(\alpha^{2n}(t^2+y_0^2)/(t^2+\alpha^{2n}y_0^2))}{t^2}, \qquad n \geq 1.$$
Using the latter relation in \eqref{eq:upper-estimate-1}, we prove that
$$\log\left(\dfrac{y_{n+1}}{\alpha^{n+1}y_0}\right) \leq K_4\int_{\R}\dfrac{1+t^2}{t^2}\log\left(\frac{\alpha^{2n}(t^2+1)}{t^2+\alpha^{2n}}\right)d\mu(t),$$
for some $K_4 > 0$, from which the upper estimate in the statement follows.

To address the lower estimate, notice that
\begin{align*}
\int_{\R}\dfrac{1+t^2}{\abs{t-f^k(z)}^2}d\mu(t) & \leq K_5\int_{\R}\dfrac{1+t^2}{t^2+y_k^2}d\mu(t) \leq K_5\int_{\R}\dfrac{1+t^2}{t^2+y_0^2}d\mu(t) \\
& \leq K_5\left(1+\dfrac{1}{y_0^2}\right)\mu(\R) < +\infty,   
\end{align*}
where $K_5 > 0$ is obtained from Lemma \ref{lemma:non-tangential}. Therefore, there exists $K_6 > 0$ so that
\begin{equation}
\label{eq:lower-estimate-1}
\log\left(1+\dfrac{1}{\alpha}\int_{\R}\dfrac{1+t^2}{\abs{t-f^k(z)}^2}d\mu(t)\right) \geq K_6 \int_{\R}\dfrac{1+t^2}{\abs{t-f^k(z)}^2}d\mu(t).
\end{equation}

Now, fix $\epsilon > 0$. By the Julia Lemma there exists $Y > 0$ so that $y_k \leq Y(\alpha+\epsilon)^k$, for all $k\in\N$. Then, using also Lemma \ref{lemma:non-tangential}, we get from \eqref{eq:divergence 0} that
\begin{align}
\label{eq:divergence 6}
\log\left(\dfrac{y^{n+1}}{\alpha_{n+1}y_0}\right) & \geq K_6\int_{\R}\sum_{k=0}^n\dfrac{1+t^2}{\abs{t-f^k(z)}^2}d\mu(t) \\
& \geq K_7\int_{\R}\sum_{k = 0}^n\dfrac{1}{t^2+y_k^2}d\mu(t) \notag \\
& \geq K_7\int_{\R}\sum_{k = 0}^n\dfrac{1}{t^2+(\alpha+\epsilon)^{2k}Y^2}d\mu(t), \notag
\end{align}
for some $K_7 > 0$. Arguing as in the proof of Lemma \ref{lemma:estimate-sum}, we have that
\begin{align*}
\sum_{k = 0}^n\dfrac{1}{t^2+(\alpha+\epsilon)^{2k}Y^2} & \geq \int_0^n\dfrac{dx}{t^2+(\alpha+\epsilon)^{2x}Y^2}  = \dfrac{\log\left(\dfrac{(\alpha+\epsilon)^{2n}(t^2+Y^2)}{t^2+(\alpha+\epsilon)^{2n}Y^2}\right)}{2t^2\log(\alpha+\epsilon)} \\
& \geq K_8\dfrac{\log\left(\dfrac{(\alpha^{2n}(t^2+1)}{t^2+a^{2n}}\right)}{t^2},
\end{align*}
where the last inequality follows from Lemma \ref{lemma:normalization} at once. As a result, in combination with \eqref{eq:divergence 6}, there exists $K_9 > 0$ with
$$\log\left(\dfrac{y_{n+1}}{\alpha^{n+1}y_0}\right) \geq K_9\int_{\R}\dfrac{1+t^2}{t^2}\log\left(\frac{\alpha^{2n}(t^2+1)}{t^2+\alpha^{2n}}\right)d\mu(t),$$
which proves the result.
\end{proof}
\end{proposition}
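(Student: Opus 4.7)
The starting point is the same identity that drives Theorem \ref{thm:H-Herglotz}. Writing $z_k=f^k(z)=x_k+iy_k$ and applying the Herglotz representation \eqref{eq:Herglotz}, one computes $y_{n+1}/y_n=\alpha+\int_{\R}(1+t^2)|t-z_n|^{-2}d\mu(t)$, so by telescoping
$$\log\left(\frac{y_{n+1}}{\alpha^{n+1}y_0}\right)=\sum_{k=0}^{n}\log\left(1+\frac{1}{\alpha}\int_{\R}\frac{1+t^2}{|t-z_k|^2}d\mu(t)\right).$$
Since $f$ is hyperbolic, its orbits converge non-tangentially, hence $|f^{n+1}(z)|\asymp y_{n+1}$ with multiplicative constants uniform in $n$; therefore $\log(|f^{n+1}(z)|/\alpha^{n+1})$ differs from the telescoped sum by an additive $O(1)$, which is harmless for a two-sided multiplicative estimate. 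The task reduces to bounding the telescoped sum, above and below, by a constant multiple of $\int_{\R}(1+t^2)t^{-2}\log(\alpha^{2n}(t^2+1)/(t^2+\alpha^{2n}))d\mu(t)$.

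\textbf{Upper bound.} Apply $\log(1+x)\leq x$ term-by-term and exchange the sum and the integral by Fubini. Lemma \ref{lemma:non-tangential} yields $|t-z_k|^2\gtrsim t^2+y_k^2$, while the Julia Lemma gives the lower estimate $y_k\geq\alpha^k y_0$, so the problem reduces to controlling $\sum_{k=0}^{n}1/(t^2+\alpha^{2k}y_0^2)$. Comparing this sum with the integral $\int_0^{n}dx/(t^2+\alpha^{2x}y_0^2)$ exactly as in the proof of Lemma \ref{lemma:estimate-sum} produces the closed form $\log(\alpha^{2n}(t^2+y_0^2)/(t^2+\alpha^{2n}y_0^2))/(2t^2\log\alpha)$, and the boundary contribution $1/(t^2+y_0^2)$ can be absorbed into this main log after checking its relative size at $t\to 0$ and $|t|\to+\infty$ (using the monotonicity already exploited in Lemma \ref{lemma:normalization}). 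Finally, Lemma \ref{lemma:normalization} replaces the $(\alpha,y_0)$-form of the log by the standardized $(\alpha,1)$-form of the statement at the cost of a multiplicative constant that depends only on $y_0$ and $\alpha$.

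\textbf{Lower bound.} Here one must reverse $\log(1+x)\leq x$ by $\log(1+x)\geq c\,x$, which requires $x$ to remain bounded. Combining Lemma \ref{lemma:non-tangential} with $y_k\geq y_0$, the integrand $(1+t^2)/|t-z_k|^2$ is uniformly bounded in $t$ by a constant depending only on $y_0$; since $\mu$ is finite, $x=\alpha^{-1}\int(1+t^2)/|t-z_k|^2d\mu$ is uniformly bounded in $k$, and the linearization is justified. Applying the opposite direction of Lemma \ref{lemma:non-tangential} and the Julia-Lemma upper bound $y_k\leq Y(\alpha+\epsilon)^k$ (for a small $\epsilon>0$ and some $Y>0$), compare the resulting sum with $\int_0^n dx/(t^2+(\alpha+\epsilon)^{2x}Y^2)$ to obtain $\log((\alpha+\epsilon)^{2n}(t^2+Y^2)/(t^2+(\alpha+\epsilon)^{2n}Y^2))/(2t^2\log(\alpha+\epsilon))$. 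Lemma \ref{lemma:normalization} once more returns us to the target $(\alpha,1)$-normalization, this time crucially using that the perturbation $\alpha\mapsto\alpha+\epsilon$ can be undone up to a multiplicative constant that is uniform in both $n$ and $t$.

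\textbf{Main obstacle.} The hardest point is matching both sides to the \emph{same} normalized integrand: the upper estimate naturally produces a log with parameters $(\alpha,y_0)$, while the lower estimate yields $(\alpha+\epsilon,Y)$. Lemma \ref{lemma:normalization} is engineered precisely to absorb this mismatch, so one must verify that its two-sided comparison applies uniformly in $n$ and $t$ at every step. A secondary technical nuisance is showing that the sum-versus-integral boundary term $1/(t^2+y_0^2)$ together with the $O(1)$ discrepancy between $|f^{n+1}(z)|$ and $y_{n+1}$ can be absorbed into the stated multiplicative constants without spoiling uniformity in $n$, which is easy in the regime where the denominator in the stated ratio tends to $+\infty$ but requires a short separate check when it stays bounded.
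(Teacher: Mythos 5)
Your proposal reproduces the paper's argument step for step: the same telescoping identity for $y_{n+1}/(\alpha^{n+1}y_0)$, the same linearization $\log(1+x)\le x$ for the upper bound and $\log(1+x)\ge cx$ (justified by uniform boundedness of the integrand, since $\mu$ is finite and the orbit stays non-tangential) for the lower bound, the same passage from the discrete sum to the integral as in Lemma~\ref{lemma:estimate-sum}, the same treatment of the boundary term $1/(t^2+y_0^2)$, and the same final appeal to Lemma~\ref{lemma:normalization} to bring both the $(\alpha,y_0)$- and $(\alpha+\epsilon,Y)$-forms of the logarithm into the standard $(\alpha,1)$-form. You even flag, slightly more explicitly than the paper does, the residual $O(1)$ mismatch between $\log\bigl(|f^{n+1}(z)|/\alpha^{n+1}\bigr)$ and $\log\bigl(y_{n+1}/(\alpha^{n+1}y_0)\bigr)$, which is the one place the published proof is terse.
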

\begin{remark}
Notice that
$$\lim_{n \to +\infty} \log\left(\frac{\alpha^{2n}(t^2+1)}{t^2+\alpha^{2n}}\right) = \log(1+t^2),$$
and therefore Proposition \ref{prop:Herglotz divergence} provides a clear and qualitative generalization of Theorem \ref{thm:H-Herglotz}.
\end{remark}

\section{Conformality}
\label{sec:conformality}
The behaviour of a holomorphic map of the upper half-plane $f \vcentcolon \H \to \C$ at a boundary point $\xi \in \partial_{\infty}\H = \R \cup \{\infty\}$ is a classical and well-established topic of research in Geometric Function Theory. In this section we will mainly use the notion of conformality of a map at a boundary point (for instance, as described in \cite[Section 4.3]{PommerenkeConfMaps}), which has appeared before in the context of Complex Dynamics; see \cite{BCDM-Rate,CDP-C2,GKMR,GKR}. More specifically, we will find a relation between self-maps of extremal rate and the conformality of their Koenigs function (see Theorems \ref{thm:Valiron} and \ref{thm:Koenigs-P}(b)) at the Denjoy--Wolff point.

As always, we will focus in the case of non-elliptic self-maps whose Denjoy--Wolff point is infinity. Therefore, infinity is a boundary fixed point for the Koenigs functions of such self-maps. Due to this, for the purposes of our article it suffices to introduce the notion of conformality at infinity for maps satisfying
$$f(\infty) = \angle\lim_{z \to \infty}f(z) = \infty.$$
\begin{definition}
Let $f \vcentcolon \H \to \C$ be a holomorphic map with $f(\infty) = \infty$. We say that $f$ is \textit{conformal at infinity} if
\begin{equation}
\label{eq:conformality-H}
f'(\infty) = \angle\lim_{z \to \infty}\dfrac{f(z)}{z} \in \C \setminus \{0\}.
\end{equation}
\end{definition}

Having mentioned the necessary definition, we are ready to proceed to the main result of the section. We will inspect the relation between the rate of convergence to infinity for self-maps of the upper half-plane and the conformality at infinity for the corresponding Koenigs function. Our examination is inspired by the already-known relation between the two notions in the case of parabolic self-maps of positive hyperbolic step. Recall that in \cite[Theorem 4.1]{CDP-C2}, the authors prove that such a self-map is of finite shift if and only if its Koenigs function is conformal at infinity. But as we mentioned before, functions of finite shift are exactly the parabolic self-maps of positive hyperbolic step that attain the extremal rate (see Proposition \ref{prop:phs}). 

In the following result, which provides a discrete analogue of \cite[Theorem 4.2]{BCDM-Rate}, we notice that the equivalence described above remains true for hyperbolic self-maps.

\begin{theorem}
\label{thm:conformality hyp}
Let $f \vcentcolon \H \to \H$ be hyperbolic with Denjoy--Wolff point infinity, and let $h \vcentcolon \H \to \H$ be its Koenigs function. Let $\alpha=f'(\infty)>1$. Then, $f$ is of extremal rate if and only if $h$ is conformal at infinity.

Moreover, in such a case, we have that
\begin{equation}\label{eq:conformality hyp limit}
\lim_{n \to +\infty}\dfrac{f^n(z)}{\alpha^n} = h(z)\left(\angle\lim_{w \to \infty}\dfrac{h(w)}{w}\right)^{-1}, \quad\textup{for all }z\in\H.
\end{equation}
\begin{proof}
As a consequence of the Julia--Wolff--Carath\'eodory Theorem (see \cite[Theorem 1.7.8]{BCDM}) we know that $h$ is conformal at infinity if and only if
\begin{equation}
\label{eq:conformality}
\inf_{z \in \H}\dfrac{\mathrm{Im}(h(z))}{\mathrm{Im}(z)} > 0.
\end{equation}
Fix $z \in \H$. For each $n \in \N$, using the sequence of mappings introduced in Theorem \ref{thm:Valiron}, we have that
$$\dfrac{\mathrm{Im}(h_n(z))}{\mathrm{Im}(z)} = \dfrac{1}{\mathrm{Im}(z)}\mathrm{Im}\left(\dfrac{f^n(z)}{\abs{f^n(i)}}\right) = \dfrac{\mathrm{Im}(f^n(z))}{\mathrm{Im}(z)}\dfrac{1}{\abs{f^n(i)}} \geq \dfrac{\alpha^n}{\abs{f^n(i)}},$$
where we have made an inductive use of the Julia Lemma, that is
$$\mathrm{Im}(f(z)) \geq \alpha \mathrm{Im}(z), \qquad z \in \H.$$
If $f$ is of extremal rate, we know that
$$\lim_{n \to +\infty}\dfrac{\alpha^n}{\abs{f^n(i)}} \in (0,+\infty).$$
Therefore, we can find $C = C(f) > 0$ and $N = N(f) \in \N$ such that
$$\dfrac{\mathrm{Im}(h_n(z))}{\mathrm{Im}(z)} \geq C, \qquad n \geq N.$$
But recall that $\{h_n\}$ converges uniformly on compacta to $h$. So, letting $n \to +\infty$, we conclude that
$$\dfrac{\mathrm{Im}(h(z))}{\mathrm{Im}(z)} \geq C.$$
Since $C$ is independent of $z$, this means that $\inf_{z \in \H}(\mathrm{Im}(h(z))/\mathrm{Im}(z)) > 0$. By \eqref{eq:conformality}, we conclude that $h$ is conformal at infinity.

For the converse direction, assume that
\begin{equation}\label{eq:conformality hyp 1}
\lim_{n \to +\infty}\dfrac{\abs{f^n(z)}}{\alpha^n} = +\infty
\end{equation}
for some $z \in \H$. Since $f$ is hyperbolic, the orbit of $z$ converges to infinity non-tangentially and with a definite angle (see Subsection \ref{subsec:discrete-dynamics}), that is $\lim_{n\to+\infty}\arg(f^n(z))\in(0,\pi)$, and hence
\begin{equation}
\label{eq:conformality hyp 2}
\lim\limits_{n\to+\infty}\frac{|f^n(z)|}{\mathrm{Im}(f^n(z))}\in(0,+\infty).
\end{equation}
In that case, combining relations \eqref{eq:conformality hyp 1} and \eqref{eq:conformality hyp 2}, we have
$$\lim_{n \to +\infty}\dfrac{\mathrm{Im}(h(f^n(z)))}{\mathrm{Im}(f^n(z))} = \lim_{n \to +\infty}\dfrac{\alpha^n\mathrm{Im}(h(z))}{\mathrm{Im}(f^n(z))} = \lim_{n \to +\infty}\dfrac{\alpha^n\mathrm{Im}(h(z))|f^n(z)|}{\abs{f^n(z)}\mathrm{Im}(f^n(z))} = 0,$$
since by the definition of the Koenigs function $h(f^n(z))=\alpha^nh(z)$; see Theorem \ref{thm:Valiron}. As a consequence, $\inf_{z \in \H}(\mathrm{Im}(h(z))/\mathrm{Im}(z)) = 0$. Then, using \eqref{eq:conformality}, we see that $h$ is not conformal at infinity.

Finally, if any (and hence both) of the conditions holds, \eqref{eq:conformality hyp limit} follows at once from the equality
\begin{align*}
\lim_{n \to +\infty}\dfrac{f^n(z)}{\alpha^n} & = \lim_{n \to +\infty}\left(\dfrac{f^n(z)}{h(f^n(z))}\dfrac{h(f^n(z))}{\alpha^n}\right) \\
& = \lim_{n \to +\infty}\dfrac{f^n(z)}{h(f^n(z))}h(z) = \left(\angle\lim_{w \to \infty}\dfrac{h(w)}{w}\right)^{-1}h(z),
\end{align*}
where we have used Theorem \ref{thm:Valiron} and the fact that $f^n(z)$ converges non-tangentially to infinity.
\end{proof}
\end{theorem}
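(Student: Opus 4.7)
The plan is to translate conformality of $h$ at $\infty$ into a geometric inequality that can be compared directly with what the Julia Lemma gives for $f$. Since $h\colon\H\to\H$ is a Koenigs map with $h(\infty)=\infty$, the Julia--Wolff--Carath\'eodory theorem characterizes conformality at infinity as the positivity of $\inf_{z\in\H} \mathrm{Im}(h(z))/\mathrm{Im}(z)$. This quantitative criterion is the common ground on which both directions of the equivalence will be argued.

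For the forward direction I would exploit the Valiron approximation $h_n(z) = f^n(z)/\abs{f^n(i)}$ from Theorem \ref{thm:Valiron}, which converges locally uniformly to $h$. Iterating the Julia Lemma for $f$ yields $\mathrm{Im}(f^n(z)) \geq \alpha^n \mathrm{Im}(z)$, and hence
$$\frac{\mathrm{Im}(h_n(z))}{\mathrm{Im}(z)} = \frac{\mathrm{Im}(f^n(z))}{\abs{f^n(i)}\mathrm{Im}(z)} \geq \frac{\alpha^n}{\abs{f^n(i)}}.$$
If $f$ is of extremal rate, Definition \ref{def:extremal hyp} together with Proposition \ref{prop:hyperbolic-rate-no-z} guarantees that the right-hand side is bounded below by a positive constant $C$ independent of $n$ and $z$ for all $n$ large. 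Passing to the limit $n\to+\infty$ in the locally uniform convergence yields $\mathrm{Im}(h(z))/\mathrm{Im}(z)\geq C$ uniformly in $z\in\H$, so $h$ is conformal at infinity.

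For the converse I would argue contrapositively. Suppose $\abs{f^n(z_0)}/\alpha^n\to+\infty$ for some $z_0\in\H$, which by Proposition \ref{prop:hyperbolic-rate-no-z} holds for every $z_0$. Hyperbolicity forces the orbit to converge non-tangentially with a definite angle in $(0,\pi)$, so $\abs{f^n(z_0)}/\mathrm{Im}(f^n(z_0))$ has a positive finite limit. Applying the Koenigs functional equation $h(f^n(z_0)) = \alpha^n h(z_0)$ and splitting the quotient along the orbit,
$$\frac{\mathrm{Im}(h(f^n(z_0)))}{\mathrm{Im}(f^n(z_0))} = \alpha^n\frac{\mathrm{Im}(h(z_0))}{\mathrm{Im}(f^n(z_0))} = \alpha^n\frac{\mathrm{Im}(h(z_0))\abs{f^n(z_0)}}{\abs{f^n(z_0)}\mathrm{Im}(f^n(z_0))} \longrightarrow 0,$$
because $\mathrm{Im}(f^n(z_0))$ is asymptotically comparable to $\abs{f^n(z_0)}$, which grows strictly faster than $\alpha^n$. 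Thus $\inf_{z\in\H}\mathrm{Im}(h(z))/\mathrm{Im}(z) = 0$, so $h$ fails to be conformal at infinity.

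For the moreover part, assuming extremal rate, I would use the same factorization
$$\frac{f^n(z)}{\alpha^n} = \frac{f^n(z)}{h(f^n(z))}\,h(z),$$
which is legitimate because $h(f^n(z)) = \alpha^n h(z)$ and, by the already-established conformality, $h(z)\neq 0$ generically (and the identity is algebraic in any case). Since $f^n(z)\to\infty$ non-tangentially and $h$ is conformal at $\infty$, the first factor tends to $\bigl(\angle\lim_{w\to\infty} h(w)/w\bigr)^{-1}$. The main obstacle I anticipate is justifying that the non-tangential limit of $h(w)/w$ can be read off along the discrete orbit; but this is automatic from the definition of the angular limit once the orbit is known to approach $\infty$ with a definite interior angle, which is precisely Valiron's statement recalled in Subsection \ref{subsec:discrete-dynamics}.
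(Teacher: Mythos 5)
Your proposal is correct and follows essentially the same route as the paper: the Julia--Wolff--Carath\'eodory criterion $\inf_{z\in\H}\mathrm{Im}(h(z))/\mathrm{Im}(z)>0$, the Valiron approximants $h_n$ together with an iterated Julia Lemma for the forward direction, the functional equation $h\circ f^n=\alpha^n h$ and non-tangential approach for the contrapositive, and the same factorization $f^n(z)/\alpha^n=(f^n(z)/h(f^n(z)))h(z)$ for the limit formula. No gaps; the one point you flag as a potential obstacle (reading the angular limit along the orbit) is handled exactly as you suggest.
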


\section{Asymptotic behaviour near infinity}
\label{sec:behaviour}
Let $f \vcentcolon \H \to \H$ be a non-elliptic self-map with Denjoy--Wolff point infinity. It would also be useful to be able to determine whether $f$ is of extremal rate or not directly from the properties of $f$ and not other related aspects. This will be the objective of the present section. To this extent, we commence by providing the following helpful result.

\begin{lemma}
\label{lemma:equivalence-integral-t}
Let $f \vcentcolon \H \to \H$ be holomorphic and represented by the triplet $(\alpha,\beta,\mu)$. The following are equivalent:
\begin{enumerate}[\hspace{0.5cm}\rm(a)]
\item $\displaystyle\int_{\R}\abs{t}d\mu(t) < +\infty$.
\item $\displaystyle\int_1^{+\infty}\dfrac{\mathrm{Im}(f(iy))-\alpha y}{y}dy < +\infty$.
\end{enumerate}
\begin{proof}
First of all, by the Julia Lemma, notice that the integrand in (b) is non-negative. For the desired equivalence, we use the Herglotz representation of $f$ and then Fubini's Theorem to see that
\begin{align}\label{eq:asymptotic 1}
\notag\int_1^{+\infty}\dfrac{\mathrm{Im}(f(iy))-\alpha y}{y}dy &=  \int_{\R}\int_1^{+\infty}\dfrac{1+t^2}{t^2+y^2}dyd\mu(t) \\
& = \int_{\R \setminus \{0\}}\dfrac{(1+t^2)\arctan(t)}{t}d\mu(t) + \mu(\{0\}).
\end{align}
Consider $F$ to be the function given by the formula
$$F(t) = \dfrac{(1+t^2)\arctan(t)}{t}, \qquad t \neq 0.$$
Notice that
$$\lim_{t \to 0}F(t) = 1, \qquad \lim_{t \to \pm\infty}\dfrac{F(t)}{\abs{t}} = \dfrac{\pi}{2}.$$
Therefore, the integrand in the latter integral in \eqref{eq:asymptotic 1} is bounded near $0$, while it is comparable to $|t|$ far from $0$. Evidently, the equivalence in the statement follows.
\end{proof}
\end{lemma}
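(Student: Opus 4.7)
The plan is to reduce both conditions to an explicit integral of a single function of $t$ against $d\mu$ by a direct Fubini computation. First, using the Herglotz representation and the identity $\frac{1+ity}{t-iy} = \frac{(1+ity)(t+iy)}{t^2+y^2}$, I would expand the numerator and take imaginary parts to obtain
\begin{equation*}
\mathrm{Im}(f(iy)) - \alpha y = y\int_{\R}\frac{1+t^2}{t^2+y^2}\,d\mu(t), \qquad y > 0.
\end{equation*}
This formula also makes transparent that the integrand in (b) is non-negative, which is alternatively an immediate consequence of the Julia Lemma applied to $z \mapsto f(z) - \alpha z$; thus no absolute value is needed and all forthcoming integrals are well-defined in $[0,+\infty]$.

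Next, I would divide the previous identity by $y$ and integrate over $y \in [1,+\infty)$. Since everything is non-negative, Tonelli's theorem allows exchanging the order of integration:
\begin{equation*}
\int_1^{+\infty}\frac{\mathrm{Im}(f(iy)) - \alpha y}{y}\,dy = \int_{\R}(1+t^2)\left(\int_1^{+\infty}\frac{dy}{t^2+y^2}\right)d\mu(t).
\end{equation*}
For $t \neq 0$, the substitution $y = |t|u$ together with $\arctan(1/|t|) = \pi/2 - \arctan(|t|)$ and the oddness of $\arctan$ yields $\int_1^{+\infty}\frac{dy}{t^2+y^2} = \arctan(t)/t$, while for $t = 0$ the inner integral equals $1$. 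Therefore the right-hand side reduces to $\int_{\R \setminus \{0\}}F(t)\,d\mu(t) + \mu(\{0\})$, where $F(t) \vcentcolon = (1+t^2)\arctan(t)/t$.

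The final step is to analyze $F$. Since $\arctan(t)/t \to 1$ as $t \to 0$, the function $F$ extends continuously to $\R$ with $F(0) = 1$, so $F$ is bounded on every compact set; together with $\mu(\R) < +\infty$, this shows that integrability against $\mu$ is decided solely by the tails of the measure. As $|t| \to +\infty$, $\arctan(t) \to \pm\pi/2$ with matching sign, so $F(t)/|t| \to \pi/2$. Hence $F(t)$ is comparable to $\max\{1,|t|\}$, and consequently $\int_{\R}F(t)\,d\mu(t) < +\infty$ if and only if $\int_{\R}|t|\,d\mu(t) < +\infty$, which is the desired equivalence. No individual step presents a genuine obstacle; the only things to watch for are clean bookkeeping at the atom $t = 0$ (handled because $\mu(\{0\}) \leq \mu(\R) < +\infty$) and the sign juggling when identifying $\arctan(t)/t$ with $\arctan(|t|)/|t|$.
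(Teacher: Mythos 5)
Your proposal is correct and follows essentially the same route as the paper's proof: take imaginary parts in the Herglotz representation, apply Fubini/Tonelli, evaluate the inner $y$-integral to get $(1+t^2)\arctan(t)/t$, and compare the resulting kernel $F(t)$ with $|t|$ near $0$ and at infinity. You simply spell out a few details the paper leaves implicit, such as the explicit substitution for the inner integral and the role of the finiteness of $\mu$ in localizing the question to the tails.
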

Condition (a) in the preceding lemma has appeared before in reference to the Herglotz representation of parabolic self-maps and their dynamical properties. For instance, it appears in a characterization of the hyperbolic step of parabolic self-maps \cite[Section 3]{HS} and in relation to self-maps of finite shift \cite[Theorem 1.5]{CZ-FiniteShift}. We now use it to extract the following result.
\begin{proposition}
Let $f \vcentcolon \H \to \H$ be parabolic of positive hyperbolic step with Denjoy--Wolff infinity. If $f$ is of extremal rate, then
$$\int_1^{+\infty}\dfrac{\mathrm{Im}(f(iy))-y}{y}dy<+\infty.$$
\begin{proof}
It follows from the combination of Lemma \ref{lemma:equivalence-integral-t} and \cite[Theorem 1.5]{CZ-FiniteShift}.
\end{proof}
\end{proposition}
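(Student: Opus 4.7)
The plan is to chain together the two cited results and translate the extremal rate hypothesis into an integrability statement on the Herglotz measure of $f$, which in turn is equivalent to the desired integral estimate.

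First, I would recall that by Proposition \ref{prop:phs}, a parabolic self-map of positive hyperbolic step with Denjoy--Wolff point infinity is of extremal rate if and only if it is of finite shift. Thus, the hypothesis on $f$ upgrades to $f$ being of finite shift. Then, by the characterization in \cite[Theorem 1.5]{CZ-FiniteShift}, this is equivalent to the Herglotz measure $\mu$ of $f$ satisfying
$$\int_{\R}|t|\, d\mu(t) < +\infty.$$

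Second, since $f$ is parabolic with Denjoy--Wolff point infinity, the coefficient $\alpha$ in its Herglotz representation equals $f'(\infty) = 1$. Applying Lemma \ref{lemma:equivalence-integral-t} with $\alpha = 1$, the finiteness of $\int_\R |t|\, d\mu(t)$ is equivalent to
$$\int_1^{+\infty}\frac{\mathrm{Im}(f(iy)) - y}{y}\, dy < +\infty,$$
which is exactly the asserted conclusion.

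There is no genuine obstacle here: both ingredients are already in place, and the proof is a one-line concatenation. The only minor point to note is that the equivalence in Lemma \ref{lemma:equivalence-integral-t} is stated for arbitrary $\alpha \geq 0$, so applying it to the parabolic case $\alpha = 1$ is immediate. If one wished to make the argument more self-contained (for example, to avoid invoking \cite{CZ-FiniteShift} as a black box), one could instead start from the definition of finite shift, $\lim_{n\to+\infty}\mathrm{Im}(f^n(z)) < +\infty$, and use the Herglotz representation to telescope $\mathrm{Im}(f^{n+1}(z)) - \mathrm{Im}(f^n(z))$; the resulting series would then be compared with $\int_\R |t|\, d\mu(t)$ via the tangential convergence of orbits guaranteed by Theorem \ref{thm:Koenigs-P}(a). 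However, given the existing literature, the short proof outlined above is the natural route.
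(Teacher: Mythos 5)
Your proof is correct and matches the paper's own argument exactly: the paper's proof is precisely the one-line concatenation of Lemma \ref{lemma:equivalence-integral-t} (with $\alpha = f'(\infty) = 1$) and \cite[Theorem 1.5]{CZ-FiniteShift}, mediated by the identification of extremal rate with finite shift from Proposition \ref{prop:phs}. You have simply unpacked the chain of equivalences the authors compress into one sentence.
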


In the hyperbolic case we can obtain a full characterization.
\begin{theorem}
\label{thm:H-properties}
Let $f \vcentcolon \H \to \H$ be hyperbolic with Denjoy--Wolff point infinity. Let $\alpha = f'(\infty) > 1$. The following are equivalent:
\begin{enumerate}[\hspace{0.5cm}\rm(a)]
\item $f$ is of extremal rate.
\item $\displaystyle\int_1^{+\infty}\abs{\dfrac{f(iy)-i\alpha y}{y^2}}dy < +\infty$.
\item $\displaystyle\int_1^{+\infty}\dfrac{\mathrm{Im}(f(iy))-\alpha y}{y^2}dy < +\infty$.
\end{enumerate}
\begin{proof}
 Let $y > 0$. We first notice that
\begin{align*}
\mathrm{Re}\left(\dfrac{f(iy)-i\alpha y}{y^2}\right) & = \dfrac{\beta}{y^2}+\int_{\R}\dfrac{t}{y^2}\dfrac{1-y^2}{t^2+y^2}d\mu(t), \\
\mathrm{Im}\left(\dfrac{f(iy)-i\alpha y}{y^2}\right) & = \int_{\R}\dfrac{1}{y}\dfrac{1+t^2}{t^2+y^2}d\mu(t).
\end{align*}

Using these expressions, we have that
$$\abs{\mathrm{Re}\left(\dfrac{f(iy)-i\alpha y}{y^2}\right)} \leq \dfrac{\abs{\beta}}{y^2}+\int_{\R}\dfrac{\abs{t}\abs{1-y^2}}{y^2(t^2+y^2)}d\mu(t).$$
Then, using Fubini's Theorem, we obtain
\begin{align}
\int_1^{+\infty}\abs{\mathrm{Re}\left(\dfrac{f(iy)-i\alpha y}{y^2}\right)}dy & \leq \int_1^{+\infty}\dfrac{\abs{\beta}}{y^2}dy + \int_{\R}\left(\int_1^{+\infty}\dfrac{\abs{t}\abs{1-y^2}}{y^2(t^2+y^2)}dy\right)d\mu(t) \nonumber \\
& \leq \abs{\beta}+\int_{\R}\left(\abs{t}\int_1^{+\infty}\dfrac{dy}{t^2+y^2}\right)d\mu(t) \nonumber \\
& \leq \abs{\beta}+\int_{\R}\abs{\arctan(t)}d\mu(t) < +\infty, \label{eq:Re}
\end{align}
since $\mu$ is a positive and finite measure. Therefore, the integral of the real part is always finite due to \eqref{eq:Re}, and hence (b) and (c) are clearly equivalent.

To continue with the imaginary part, for $t \neq 0$, executing integration by parts, we notice that
\begin{equation}\label{eq:integral}
\int_{1}^{+\infty}\dfrac{dy}{y(t^2+y^2)} = \lim\limits_{y\to+\infty}\left(\dfrac{\log(y)}{t^2}-\dfrac{\log(t^2+y^2)}{2t^2}\right)+\dfrac{\log(1+t^2)}{2t^2}.
\end{equation}
Then, using Fubini's Theorem and finding that the limit in \eqref{eq:integral} is equal to $0$, we get
\begin{align}
\int_1^{+\infty}\mathrm{Im}\left(\dfrac{f(iy)-i\alpha y}{y^2}\right)dy & = \int_{\R}\left(\int_1^{+\infty}\dfrac{1}{y}\dfrac{1+t^2}{t^2+y^2}dy\right)d\mu(t) \nonumber \\
& = \int_{\R \setminus \{0\}}(1+t^2)\dfrac{\log(1+t^2)}{2t^2}d\mu(t) + \dfrac{1}{2}\mu(\{0\}). \label{eq:Im}
\end{align}
Setting
$$F(t) = (1+t^2)\dfrac{\log(1+t^2)}{2t^2}, \qquad t \ne 0,$$
we see that
\begin{equation}
\label{eq:equivalences-Im}
\lim_{t \to 0}F(t) = \dfrac{1}{2}, \qquad \lim_{t \to \pm\infty}\dfrac{F(t)}{\log(1+\abs{t})} = 1.    
\end{equation}
Therefore, arguing as at the end of the proof for Lemma \ref{lemma:equivalence-integral-t}, the desired equivalence for the extremal rate is established by using Theorem \ref{thm:H-Herglotz}.
\end{proof}
\end{theorem}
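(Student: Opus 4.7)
The natural strategy is to exploit the Herglotz representation $f(z) = \alpha z + \beta + \int_\R \frac{1+tz}{t-z}\, d\mu(t)$ and express everything in terms of $\mu$, then connect the result with Theorem \ref{thm:H-Herglotz}, which already characterizes extremal rate by the condition $\int_\R \log(1+|t|)\, d\mu(t) < +\infty$. So the plan is really to reduce (b) and (c) to that same integrability condition.

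First I would substitute $z = iy$ into the Herglotz representation, multiply numerator and denominator of the kernel by the conjugate, and separate real and imaginary parts. A direct computation gives
$$\operatorname{Re}\!\left(\tfrac{f(iy)-i\alpha y}{y^2}\right) = \tfrac{\beta}{y^2} + \int_\R \tfrac{t(1-y^2)}{y^2(t^2+y^2)}\,d\mu(t), \qquad \operatorname{Im}\!\left(\tfrac{f(iy)-i\alpha y}{y^2}\right) = \int_\R \tfrac{1+t^2}{y(t^2+y^2)}\,d\mu(t).$$
Note the imaginary part is automatically non-negative (consistent with the Julia Lemma), so (c) involves no absolute value.

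Next I would prove the equivalence (b)$\Leftrightarrow$(c) by showing that $\int_1^{+\infty}|\operatorname{Re}(\cdots)|\,dy$ is \emph{always} finite, independently of $\mu$. Since $|1-y^2|/y^2 \le 1$ for $y\ge 1$, a Fubini swap bounds this integral by $|\beta| + \int_\R |t|\int_1^{+\infty}\frac{dy}{t^2+y^2}\,d\mu(t) \le |\beta| + \frac{\pi}{2}\mu(\R) < +\infty$. Because $|f(iy)-i\alpha y|/y^2$ is comparable to $|\operatorname{Re}|+|\operatorname{Im}|$, this immediately yields (b)$\Leftrightarrow$(c).

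The main step is (c)$\Leftrightarrow$(a). Applying Fubini to the imaginary-part integral gives
$$\int_1^{+\infty} \operatorname{Im}\!\left(\tfrac{f(iy)-i\alpha y}{y^2}\right)dy = \int_\R (1+t^2)\int_1^{+\infty}\tfrac{dy}{y(t^2+y^2)}\,d\mu(t).$$
The inner integral can be evaluated by the partial fraction $\frac{1}{y(t^2+y^2)} = \frac{1}{t^2 y} - \frac{y}{t^2(t^2+y^2)}$ for $t \ne 0$ (with the $\log y$ and $\log\sqrt{t^2+y^2}$ pieces combining so that the limit at infinity vanishes), producing $\frac{\log(1+t^2)}{2t^2}$; the value at $t=0$ is $1/2$. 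Thus (c) is equivalent to $\int_\R F(t)\,d\mu(t)<+\infty$ with $F(t) = (1+t^2)\log(1+t^2)/(2t^2)$. Since $F$ is continuous on $\R$ with $F(t)/\log(1+|t|)\to 1$ as $|t|\to\infty$, this integrability condition is equivalent to $\int_\R \log(1+|t|)\,d\mu(t)<+\infty$, and Theorem \ref{thm:H-Herglotz} finishes the job.

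The only delicate point I anticipate is keeping track of the boundary contribution in the inner integral of Step 3 (to confirm that the combination $\frac{\log y}{t^2} - \frac{\log(t^2+y^2)}{2t^2}$ vanishes as $y\to+\infty$, leaving only the clean $\frac{\log(1+t^2)}{2t^2}$ piece); beyond that, the rest is bookkeeping and Fubini applied to positive integrands, so no convergence issues arise.
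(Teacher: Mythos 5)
Your proposal is correct and follows essentially the same route as the paper: compute real and imaginary parts via the Herglotz representation, observe the real-part integral is always finite (giving (b)$\Leftrightarrow$(c)), then reduce (c) via Fubini to $\int_\R F(t)\,d\mu(t)<+\infty$ with $F(t)=(1+t^2)\log(1+t^2)/(2t^2)$ and appeal to Theorem~\ref{thm:H-Herglotz}. The only cosmetic differences are that you bound the real-part integral by $|\beta|+\tfrac{\pi}{2}\mu(\R)$ rather than $|\beta|+\int_\R|\arctan t|\,d\mu(t)$, and you evaluate the inner $y$-integral via partial fractions rather than integration by parts; these are the same computation.
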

\begin{remark}
In our previous theorem, the equivalence between (a) and (b) was first established by Pommerenke; see \cite[Theorem 2]{Pommerenke-Hyperbolic}. His proof relies on a previous characterization due to Valiron; see \cite[Th\'eor\`eme IV]{Valiron}.
\end{remark}

\section{Hyperbolic distance}
\label{sec:hyp-dist}
In this section we will examine how the asymptotic behaviour of the hyperbolic distance between the orbit and a reference point in $\H$ determines the appearance of the extremal rate. We will work in the setting of both hyperbolic maps and parabolic maps of positive hyperbolic step. With the use of hyperbolic distance we will find characterizations of the extremal rates, thus finding an equivalent definition for finite shift as well.

We start with hyperbolic self-maps. As we mentioned before, given a hyperbolic function $f\vcentcolon\H\to\H$, each orbit converges to the Denjoy--Wolff point with a definite angle and in particular non-tangentially. However, distinct orbits land with different angles. Assuming that $f$ has Denjoy--Wolff point infinity allows us to express these angles easily. From now on, for a hyperbolic function $f$ and for $z\in\H$, we set
$$\lim\limits_{n\to+\infty}\arg(f^n(z))=:\theta(z)\in(0,\pi).$$
Clearly, if $z_1,z_2\in\H$ belong to the same orbit, then $\theta(z_1)=\theta(z_2)$.

With all the above in mind, we proceed to our result:
\begin{theorem}
\label{thm:hyperbolic rate hyp}
Let $f\vcentcolon\H\to\H$ be hyperbolic with Denjoy--Wolff point infinity. Let $\alpha=f'(\infty)>1$. Then, $f$ is of extremal rate if and only if the limit
$$\lim\limits_{n\to+\infty}\left(d_\H(i,f^n(z))-\frac{\log(\alpha)}{2}n\right)$$
exists in $\R$, for all $z\in\H$. Moreover, if any of the conditions are met, then
\begin{equation}\label{eq:hyp hyp 1}
\lim\limits_{n\to+\infty}\left(d_\H(i,f^n(z))-\frac{\log(\alpha)}{2}n\right)=\frac{1}{2}\log\left(\dfrac{|h(z)L(i)|}{\sin(\arg(h(z)))}\right),
\end{equation}
where $h$ is the Koenigs function of $f$ and $L$ is the function defined in Proposition \ref{prop:hyperbolic-rate-no-z}.
\end{theorem}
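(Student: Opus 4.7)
The plan is to reduce the whole statement to a single asymptotic expansion for $d_\H(i,w)$ as $w\to\infty$ in $\H$, and then to substitute $w=f^n(z)$, exploiting hyperbolicity to control the modulus and the argument of the orbit simultaneously.

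First I would derive the elementary expansion
\begin{equation*}
d_\H(i,w) = \tfrac{1}{2}\log|w| - \tfrac{1}{2}\log\sin(\arg w) + o(1), \qquad |w|\to+\infty, \ \arg w\in(0,\pi).
\end{equation*}
This follows directly from \eqref{eq:hypdistance in H}: writing $w=u+iv$, one computes $1-\rho_\H(i,w)^2 = 4v/(u^2+(v+1)^2)$, so that
\begin{equation*}
\exp(2d_\H(i,w)) = \frac{1+\rho_\H(i,w)}{1-\rho_\H(i,w)} = \frac{(1+\rho_\H(i,w))^2(u^2+(v+1)^2)}{4v}.
\end{equation*}
As $|w|\to+\infty$ we have $\rho_\H(i,w)\to 1$ and $(u^2+(v+1)^2)/|w|^2\to 1$; coupled with $v=|w|\sin(\arg w)$, taking logarithms and dividing by $2$ produces the claimed asymptotic.

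Next, I apply this to $w_n=f^n(z)$. Since $f$ is hyperbolic with Denjoy--Wolff point infinity, $|f^n(z)|\to+\infty$ and, by Subsection \ref{subsec:discrete-dynamics} and Remark \ref{rem:argument}, $\arg(f^n(z))\to\arg h(z)\in(0,\pi)$, where $h$ is the Koenigs function of $f$. Substituting and rearranging,
\begin{equation*}
d_\H(i,f^n(z))-\frac{\log\alpha}{2}n = \frac{1}{2}\log\frac{|f^n(z)|}{\alpha^n}-\frac{1}{2}\log\sin(\arg f^n(z)) + o(1).
\end{equation*}
The second term on the right already tends to the finite real number $-\tfrac{1}{2}\log\sin(\arg h(z))$. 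The stated equivalence is then dictated by Proposition \ref{prop:hyperbolic-rate-no-z}: if $f$ is of extremal rate, then $|f^n(z)|/\alpha^n\to|L(z)|=|h(z)L(i)|\in(0,+\infty)$, so the full expression converges to $\tfrac{1}{2}\log(|h(z)L(i)|/\sin(\arg h(z)))$, which is exactly \eqref{eq:hyp hyp 1}; otherwise $|f^n(z)|/\alpha^n\to+\infty$, while the second term stays bounded, and hence the difference diverges to $+\infty$, precluding a real limit.

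The only point of care is making sure the $o(1)$ error in the first step is uniform as $w$ runs off to infinity along the orbit. This is harmless here because the error comes solely from $\rho_\H(i,w)\to 1$ and from $(u^2+(v+1)^2)/(u^2+v^2)\to 1$, both driven by $|w|\to+\infty$ alone and insensitive to the approach direction. Everything else is bookkeeping against results already proved in the excerpt; in particular, the identification of the landing angle $\theta(z)$ of the orbit with $\arg h(z)$ is supplied by Remark \ref{rem:argument}, so I do not expect additional hidden obstacles.
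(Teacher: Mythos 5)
Your proposal is correct and follows essentially the same route as the paper: both decompose $d_\H(i,f^n(z))$ via the explicit formula \eqref{eq:hypdistance in H} to isolate the leading term $\tfrac{1}{2}\log\lvert f^n(z)\rvert$ and a bounded term involving $\sin(\arg f^n(z))$, then invoke the non-tangential landing and the dichotomy of Proposition \ref{prop:hyperbolic-rate-no-z}. Your packaging of the computation as a single asymptotic expansion for $d_\H(i,w)$ is a slightly tidier presentation of the same argument, and your remark about uniformity of the $o(1)$ term is correct and appropriately addressed.
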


\begin{proof}
Recall that by \eqref{eq:hypdistance in H}
\begin{equation}\label{eq:hyp hyp 2}
d_{\H}(i,f^n(z)) = \dfrac{1}{2}\log\dfrac{1+\abs{\frac{f^n(z)-i}{f^n(z)+i}}}{1-\abs{\frac{f^n(z)-i}{f^n(z)+i}}} = \dfrac{1}{2}\log\dfrac{\left(1+\abs{\frac{f^n(z)-i}{f^n(z)+i}}\right)^2}{1-\abs{\frac{f^n(z)-i}{f^n(z)+i}}^2}.
\end{equation}
Since $f^n(z)$ converges to infinity, as $n \to +\infty$, we have that
\begin{equation}\label{eq:hyp hyp 3}
\lim_{n \to +\infty}\left(1+\abs{\frac{f^n(z)-i}{f^n(z)+i}}\right)^2 = 4.
\end{equation}
For the denominator of the fraction in the latter logarithm in \eqref{eq:hyp hyp 2}, for each $n\in\N$ let $f^n(z) = x_n+iy_n$, and notice that
$$1-\abs{\dfrac{f^n(z)-i}{f^n(z)+i}}^2 = \dfrac{\abs{f^n(z)+i}^2-\abs{f^n(z)-i}^2}{\abs{f^n(z)+i}^2} = \dfrac{4y_n}{\abs{f^n(z)+i}^2}.$$
Therefore, using relations \eqref{eq:hyp hyp 2} and \eqref{eq:hyp hyp 3}, we conclude that
$$\lim_{n \to +\infty}\left[d_{\H}(i,f^n(z)) - \dfrac{1}{2}\log\left(\dfrac{\abs{f^n(z)+i}^2}{y_n}\right)\right] = 0.$$
On another note, since the convergence to infinity is realized through a definite angle, we have
$$\lim_{n \to +\infty}\dfrac{\abs{f^n(z)+i}}{y_n} = \sqrt{\dfrac{1}{\tan^2(\theta(z))}+1} = \dfrac{1}{\sin(\theta(z))}.$$
Summing up,
\begin{equation}\label{eq:hyp hyp 4}
\lim_{n \to +\infty}\left(d_{\H}(i,f^n(z))-\dfrac{1}{2}\log(\abs{f^n(z)+i})\right) = \dfrac{1}{2}\log\left(\dfrac{1}{\sin(\theta(z))}\right).
\end{equation}
But notice that by definition $f$ is of extremal rate if and only if
\begin{equation}\label{eq:hyp hyp 5}
\lim_{n \to +\infty}\left(\log(\abs{f^n(z)+i})-n\log(\alpha)\right) = \lim_{n \to +\infty}\log\left(\dfrac{\abs{f^n(z)+i}}{\alpha^n}\right) \in \R.
\end{equation}
Therefore, combining \eqref{eq:hyp hyp 4} with \eqref{eq:hyp hyp 5}, we deduce that $f$ is of extremal if and only if
$$\lim_{n \to \infty}\left(d_{\H}(i,f^n(z))-\dfrac{\log(\alpha)}{2}n\right) \in \R.$$
Again from \eqref{eq:hyp hyp 4} and \eqref{eq:hyp hyp 5}, we see that the value of the limit is actually equal to $\frac{1}{2}\log(\frac{|L(z)|}{\sin(\theta(z))})$. But when the rate is extremal and hence $L(i)\ne\infty$, from Remark \ref{rem:argument}, we have $L(z)=h(z)|L(i)|$, while $\theta(z)=\arg(h(z))$. These observations lead to \eqref{eq:hyp hyp 1}.
\end{proof}

\begin{remark}
If $f$ is not of extremal rate, combining \eqref{eq:hyp hyp 4}, \eqref{eq:hyp hyp 5}, and Proposition \ref{prop:hyperbolic-rate-no-z}, we obtain
$$\lim\limits_{n\to+\infty}\left(d_\H(i,f^n(z))-\frac{\log(\alpha)}{2}n\right) = +\infty.$$
In such a case, $\abs{L(i)} = + \infty$ and so \eqref{eq:hyp hyp 1} still holds.
\end{remark}

Moving on, we will examine parabolic self-maps of positive hyperbolic step with extremal rate, or in other words non-elliptic self-maps of finite shift. First, we will need the following lemma (cf. \cite[Theorem 1.7.8]{BCDM}) relating conformality at the boundary with hyperbolic distance.

\begin{lemma}
\label{lm:conformality hyperbolic distance}
Let $h\vcentcolon\H\to\H$ be holomorphic. Then, $h$ is conformal at infinity if and only if 
\begin{equation*}
\liminf\limits_{\H\ni w\to\infty}\left(d_\H(i,w)-d_\H(i,h(w))\right)<+\infty.
\end{equation*}
\end{lemma}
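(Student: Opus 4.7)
The key tool is the boundary asymptotic
\begin{equation*}
d_\H(i,w) = \log|w+i| - \tfrac{1}{2}\log\mathrm{Im}(w) + o(1), \qquad w\to\partial_\infty\H.
\end{equation*}
I would derive this directly from \eqref{eq:hypdistance in H}: writing $r(w):=|(w-i)/(w+i)|$, the algebraic identity $1-r(w)^2 = 4\,\mathrm{Im}(w)/|w+i|^2$ yields $d_\H(i,w) = \tfrac{1}{2}\log((1+r(w))^2|w+i|^2/(4\,\mathrm{Im}(w)))$, and since $r(w)\to 1$ as $w$ approaches $\partial_\infty\H$, the claimed asymptotic follows.

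For the forward implication, I would assume $h$ is conformal at infinity, so $h(\infty)=\infty$ and $c:=h'(\infty)\in(0,+\infty)$, and test along the non-tangential ray $w=iy$ with $y\to+\infty$. The angular derivative yields $h(iy)/(iy)\to c$, so $|h(iy)|\sim cy$ and $\mathrm{Im}(h(iy))\sim cy$. Plugging into the asymptotic gives
\begin{equation*}
d_\H(i,iy) - d_\H(i,h(iy)) = \tfrac{1}{2}\log y - \tfrac{1}{2}\log(cy) + o(1) \longrightarrow -\tfrac{1}{2}\log c \in \R,
\end{equation*}
so the liminf is at most $-\tfrac{1}{2}\log c<+\infty$.

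For the reverse implication, I would pick an extremizing sequence $w_n\to\infty$ along which $d_\H(i,w_n)-d_\H(i,h(w_n))\le M$. Since $d_\H(i,w_n)\to+\infty$, the Schwarz--Pick inequality $d_\H(i,h(w_n))\le d_\H(i,w_n)+d_\H(i,h(i))$ forces $d_\H(i,h(w_n))\to+\infty$, hence $h(w_n)\to\partial_\infty\H$ and the asymptotic applies to both arguments. The hypothesis then reads
\begin{equation*}
\log\frac{|w_n+i|}{|h(w_n)+i|} + \tfrac{1}{2}\log\frac{\mathrm{Im}(h(w_n))}{\mathrm{Im}(w_n)} \le M + o(1).
\end{equation*}
I would then use Schwarz--Pick in the form $|h(w)+i|^2/\mathrm{Im}(h(w)) \le C\,|w+i|^2/\mathrm{Im}(w)$ to tie the two summands together; combined with the hypothesis $h(\infty)=\infty$ (implicit in speaking of conformality at infinity and holding in the context where the lemma is applied), this pins down $\mathrm{Im}(h(w_n))/\mathrm{Im}(w_n)$ as bounded below by a positive constant. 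By the Julia--Wolff--Carath\'eodory theorem \cite[Theorem 1.7.8]{BCDM}, the positivity of the boundary dilatation coefficient at $\infty$ is exactly the condition characterizing conformality of $h$ at infinity.

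The main obstacle is the reverse direction: the two logarithmic summands have opposite tendencies when the boundary dilatation degenerates, and Schwarz--Pick alone controls their sum but not either summand individually. The argument must therefore exploit the implicit $h(\infty)=\infty$ (without which, e.g., $h(w)=-1/w$ would be a counterexample, being a hyperbolic isometry that fixes $i$ yet sends $\infty$ to $0$) to rule out such perverse cancellation and feed the resulting uniform lower bound on $\mathrm{Im}(h(w))/\mathrm{Im}(w)$ into Julia--Wolff--Carath\'eodory.
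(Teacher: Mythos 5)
The paper offers no written proof of this lemma; it simply records it with the citation "(cf.\ \cite[Theorem 1.7.8]{BCDM})", so the intended argument is a direct appeal to the Julia--Wolff--Carath\'eodory theorem after transferring to the disc. Your boundary asymptotic for $d_\H(i,w)$, the forward direction computed along $w=iy$, and the observation that the statement tacitly requires $h(\infty)=\infty$ (with $h(w)=-1/w$ as the cautionary example) are all correct and worth recording. However, the reverse direction as you have sketched it contains two genuine problems.

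First, a small but real misattribution: you write that Schwarz--Pick "forces $d_\H(i,h(w_n))\to+\infty$." Schwarz--Pick gives only an \emph{upper} bound $d_\H(i,h(w_n))\le d_\H(i,w_n)+d_\H(i,h(i))$, which cannot force divergence. What does force it is the hypothesis itself: $d_\H(i,h(w_n))\ge d_\H(i,w_n)-M\to+\infty$ along the extremizing sequence.

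Second, and more seriously, the reverse direction is left open. You correctly identify that the hypothesis only controls the \emph{sum}
\begin{equation*}
\log\frac{|w_n+i|}{|h(w_n)+i|}+\tfrac12\log\frac{\mathrm{Im}(h(w_n))}{\mathrm{Im}(w_n)},
\end{equation*}
but then aim at a lower bound for the single summand $\mathrm{Im}(h(w_n))/\mathrm{Im}(w_n)$, concede that Schwarz--Pick cannot separate the two, and leave the "perverse cancellation" unresolved. The gap is that you are shooting at the wrong target. A lower bound on $\mathrm{Im}(h(w))/\mathrm{Im}(w)$ is an equivalent \emph{reformulation} of the conclusion (conformality), not what Julia--Wolff--Carath\'eodory takes as input. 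What that theorem actually takes as input is finiteness of the boundary dilatation $\beta_\phi(1)=\liminf_{z\to 1}\frac{1-|\phi(z)|}{1-|z|}$ for the conjugated map $\phi=S^{-1}\circ h\circ S$ on $\D$. And the two-term sum above is, up to a bounded additive error, precisely $\tfrac12\log\frac{1-|\phi(z_n)|}{1-|z_n|}$ (one checks $\frac{1-|\phi(z)|^2}{1-|z|^2}=\frac{\mathrm{Im}(h(w))}{\mathrm{Im}(w)}\cdot\frac{|w+i|^2}{|h(w)+i|^2}$, and the factor $(1+|z|)/(1+|\phi(z)|)$ is bounded). So the hypothesis, combined with the Schwarz--Pick lower bound you already derived, yields $\liminf_{z\to 1}\frac{1-|\phi(z)|}{1-|z|}<+\infty$. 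Together with $\phi(1)=1$ (that is, $h(\infty)=\infty$), Julia--Wolff--Carath\'eodory then gives conformality of $\phi$ at $1$, hence of $h$ at $\infty$, with no cancellation issue to confront. In short: the "opposite tendencies" obstacle disappears once you feed the \emph{sum} into the theorem rather than trying to isolate one summand.
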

Applying this result on the Koenigs function of a parabolic self-map of positive hyperbolic step will aid us in proving the following theorem which is the counterpart of Theorem \ref{thm:hyperbolic rate hyp} and refines \cite[Theorem 1.2(ii)]{KTZ}. Before that, recall that whenever a function $f\vcentcolon\H\to\H$ with Denjoy--Wolff point infinity is of finite shift, the imaginary part of each orbit is bounded. In such a case, we set $I(z)\vcentcolon =\lim_{n\to+\infty}\mathrm{Im}(f^n(z))$, $z\in\H$.

\begin{theorem}
\label{thm:hyperbolic rate phs}
Let $f:\H\to\H$ be parabolic of positive hyperbolic step with Denjoy--Wolff point infinity. Then, $f$ is of extremal rate if and only if the limit
\begin{equation*}
\lim\limits_{n\to+\infty}\left(d_\H(i,f^n(z))-\log (n)\right)
\end{equation*}
exists in $\R$, for all $z\in\H$. Moreover, if any of the conditions are met, then
\begin{equation}\label{eq:hyperbolic rate phs}
\lim\limits_{n\to+\infty}\left(d_\H(i,f^n(z))-\log (n)\right)=\log\left(\frac{|L(z)|}{\sqrt{I(z)}}\right),
\end{equation}
where $L$ is the function defined in Proposition \ref{prop:phs}.
\end{theorem}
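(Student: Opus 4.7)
The strategy parallels the proof of Theorem \ref{thm:hyperbolic rate hyp}, with the exponential growth rate $\alpha^n$ replaced by the linear rate $n$, and adapted to the tangential convergence of parabolic orbits.

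\emph{Step 1 (Reduction).} I would begin exactly as in the proof of Theorem \ref{thm:hyperbolic rate hyp}, using formula \eqref{eq:hypdistance in H} together with $f^n(z) \to \infty$ to obtain
\begin{equation*}
d_\H(i, f^n(z)) - \tfrac{1}{2}\log\!\left(\frac{|f^n(z)+i|^2}{y_n}\right) \longrightarrow 0, \qquad n \to \infty,
\end{equation*}
where $y_n \vcentcolon= \mathrm{Im}(f^n(z))$. The statement of the theorem then reduces to showing that $|f^n(z)+i|^2/(n^2 y_n)$ has a finite positive limit exactly when $f$ is of extremal rate, and identifying that limit as $L(z)^2/I(z)$.

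\emph{Step 2 (Sufficiency and value of the limit).} If $f$ is of extremal rate, equivalently of finite shift, then $y_n \to I(z) \in (0,+\infty)$, and by Proposition \ref{prop:phs} we have $f^n(z)/n \to L(z) \in \R \setminus \{0\}$. Since $y_n$ stays bounded while $|f^n(z)| \to \infty$, one also has $|f^n(z)+i|/|f^n(z)| \to 1$. Multiplying out gives $|f^n(z)+i|^2/(n^2 y_n) \to L(z)^2/I(z)$, and combining with Step 1 establishes both the convergence and the identity \eqref{eq:hyperbolic rate phs}.

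\emph{Step 3 (Necessity via the Koenigs function).} For the converse I would assume $f$ is not of extremal rate, hence of infinite shift, and derive $d_\H(i, f^n(z)) - \log n \to +\infty$. Let $h$ be the Koenigs function of $f$ from Theorem \ref{thm:Koenigs-P}(b). Since $f$ is parabolic of positive hyperbolic step, Pommerenke's theory places $h(\H)$ in a half-plane, so after an affine normalization I may assume $h\colon \H \to \H$ with intertwining $h\circ f = h \pm 1$ (the sign matching that of the constant $b$ in Theorem \ref{thm:Koenigs-P}(a)). By \cite[Theorem 4.1]{CDP-C2}, $h$ is not conformal at infinity; hence Lemma \ref{lm:conformality hyperbolic distance} yields
\begin{equation*}
\liminf_{\H \ni w \to \infty}\left(d_\H(i,w) - d_\H(i,h(w))\right) = +\infty.
\end{equation*}
Evaluating along $w = f^n(z) \to \infty$ and using $h\circ f^n = h \pm n$ gives $d_\H(i, f^n(z)) - d_\H(i, h(z) \pm n) \to +\infty$. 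A direct asymptotic calculation analogous to Step 1, applied to $h(z) \pm n$ (whose real part tends to $\pm\infty$ while whose imaginary part is the fixed positive constant $\mathrm{Im}(h(z))$), gives $d_\H(i, h(z) \pm n) = \log n - \tfrac{1}{2}\log(\mathrm{Im}(h(z))) + o(1)$. Subtracting shows $d_\H(i, f^n(z)) - \log n \to +\infty$, contradicting the existence of a finite limit.

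The main technical point is the normalization allowing $h$ to be viewed as a self-map of $\H$, so that Lemma \ref{lm:conformality hyperbolic distance} applies verbatim; this leverages Pommerenke's description of the Koenigs function in the positive hyperbolic step regime. The remainder is a direct adaptation of the hyperbolic case.
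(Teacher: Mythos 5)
Your proposal is correct and follows essentially the same route as the paper: the same reduction via the explicit hyperbolic distance formula and the convergence $|f^n(z)+i|^2/(n^2 y_n) \to L(z)^2/I(z)$ in the finite-shift case, and for the converse the same use of the Koenigs function together with Lemma \ref{lm:conformality hyperbolic distance} applied along the orbit, the identity $h\circ f^n = h + n$, and the asymptotic $d_\H(i,w+n) = \log n - \tfrac{1}{2}\log(\mathrm{Im}\,w) + o(1)$. Your explicit handling of the sign $\pm n$ in the normalization of $h$ is a slightly more careful version of the paper's ``WLOG $h(\H)\subset\H$,'' but it does not change the argument.
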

\begin{proof}
Fix $z\in\H$. First, assume that $f$ is of extremal rate. By definition, this means that $|L(z)|=\lim_{n\to+\infty}(|f^n(z)|/n)$ exists in $(0,+\infty)$. Following a procedure similar to the previous proof, we have
\begin{eqnarray*}
&& d_\H(i,f^n(z))-\log (n)=\frac{1}{2}\log\frac{|f^n(z)+i|+|f^n(z)-i|}{|f^n(z)+i|-|f^n(z)-i|}-\log (n) \\
&=&\log\frac{|f^n(z)+i|}{n}+\log\left(1+\frac{|f^n(z)-i|}{|f^n(z)+i|}\right)-\frac{1}{2}\log(4\mathrm{Im}(f^n(z))).
\end{eqnarray*}
Taking limits as $n\to+\infty$, we obtain
\begin{equation}\label{eq:hyperbolic rate phs 1}
\lim\limits_{n\to+\infty}\left(d_\H(i,f^n(z))-\log (n)\right)=\log(|L(z)|)-\frac{1}{2}\log\left(\lim\limits_{n\to+\infty}\mathrm{Im}(f^n(z))\right).
\end{equation}
Recall that $f$ being of extremal rate is synonymous to $f$ being of finite shift. Therefore, $\lim_{n\to+\infty}\mathrm{Im}(f^n(z))$ exists in $(0,+\infty)$ and \eqref{eq:hyperbolic rate phs 1} is exactly \eqref{eq:hyperbolic rate phs}.

For the converse implication, assume that $f$ is not of extremal rate and thus is of infinite shift. Let $h$ be the Koenigs function of $f$ and without loss of generality, assume that $\Omega\vcentcolon=h(\H)\subset \H$. As we mentioned in Section \ref{sec:conformality}, $h$ is not conformal at infinity. Hence, in view of Lemma \ref{lm:conformality hyperbolic distance},
\begin{equation}\label{eq:hyperbolic rate phs 2}
\liminf\limits_{\H\ni w\to\infty}\left(d_\H(i,w)-d_\H(i,h(w))\right)=+\infty.
\end{equation}
As a result the actual limit is infinite and in place of $\H\ni w\to\infty$ we may take $f^n(z)$ since $f$ has infinity as its Denjoy--Wolff point. In addition, since $h$ is the Koenigs function of $f$, we have $h\circ f^n=h+n$. Consequently, \eqref{eq:hyperbolic rate phs 2} implies
\begin{equation}\label{eq:hyperbolic rate phs 3}
\lim\limits_{n\to+\infty}\left(d_\H(i,f^n(z))-d_\H(i,h(z)+n)\right)=+\infty.
\end{equation}
Using \eqref{eq:hypdistance in H}, we easily notice that
\begin{equation*}
\lim_{n \to +\infty}(d_{\H}(i,w+n)-\log(n)) = -\dfrac{1}{2}\log(\mathrm{Im}(w)), \qquad w \in \H.
\end{equation*}
In conclusion, applying this piece of information on \eqref{eq:hyperbolic rate phs 3}, we obtain
\begin{equation*}
\lim\limits_{n\to+\infty}\left(d_\H(i,f^n(z))-\log (n)\right)=+\infty,
\end{equation*}
which provides the desired equivalence.
\end{proof}

\begin{remark}
\label{rem:infinite shift rate}
Looking closely at the proof of the previous theorem, we may observe that in case $f$ is not of extremal rate (and thus its orbits have unbounded imaginary part), $\lim_{n\to+\infty}(|f^n(z)|/(n\sqrt{\mathrm{Im}(f^n(z))}))=+\infty$, which slightly strengthens Proposition \ref{prop:phs}. In addition, this fact shows that \eqref{eq:hyperbolic rate phs} is also satisfied even in the absence of extremal rate.
\end{remark}

\section{Composition operators}
\label{sec:comp-operators}
In recent literature, the rate of convergence to the Denjoy--Wolff point has been principally studied for continuous semigroups, and almost exclusively in the setting of the unit disc. For this reason, it makes sense to translate the extremal rates we defined and investigate how they are represented in the unit disc. As we will see in a moment, this translation automatically leads to a new description concerning the norms of composition operators.

Clearly, the settings of the unit disc and of the upper half-plane can be thought of as interchangeable by using M\"{o}bius transformations. Hence, the dynamical aspects that we have already described for $\H$ may be easily translated to $\D$. Let $f \vcentcolon \H \to \H$ be a non-elliptic self-map with Denjoy--Wolff point infinity, and choose $\tau \in \partial\D$. We define the conjugated map $g \vcentcolon \D \to \D$ as $g = S^{-1} \circ f \circ S$, where $S \vcentcolon \D \to \H$ is the M\"obius transformation given by
\begin{equation}
\label{eq:map-S}
S(z) = i\dfrac{\tau+z}{\tau-z}, \quad z \in \D, \quad S^{-1}(w) = \tau\dfrac{w-i}{w+i}, \quad w \in \H.
\end{equation}
We will say that $g$ is a \textit{conjugation} of $f$ in $\D$. For $z \in \D$ and $n \in \N$, we have that $g^n(z) = S^{-1}(f^n(w))$, where $w = S(z)$. In particular, since $S^{-1}(\infty) = \tau$, it is clear that $\lim_{n \to +\infty}g^n(z) = \tau$ for all $z \in \D$. For this reason, we say that $\tau$ is the Denjoy--Wolff point of $g$.

Due to \eqref{eq:map-S}, direct calculations show that 
\begin{equation}\label{eq:rate in D}
\lim_{n\to+\infty}(\lvert g^n(z)-\tau\rvert\lvert f^n(w)\rvert)=2, 
\end{equation}
for all $z\in\D$ and $w=S(z)$. This allows us to prove the following basic lemmas:

\begin{lemma}
\label{lm:hyperbolic D}
Let $f\vcentcolon\H\to\H$ be hyperbolic with Denjoy--Wolff point infinity and $g$ its conjugation in $\D$ with Denjoy--Wolff point $\tau\in\partial\D$. Let $\alpha=f'(\infty)>1$. Then, the following are equivalent:
\begin{enumerate}
\item[\textup{(a)}] $f$ is of extremal rate.
\item[\textup{(b)}] $\lim_{n\to+\infty}(\alpha^n\lvert g^n(z)-\tau\rvert)$ exists in $(0,+\infty)$ for some (and hence all) $z\in\D$.
\item [\textup{(c)}] $\lim_{n\to+\infty}[\alpha^n(1-\lvert g^n(z)\rvert)]$ exists in $(0,+\infty)$ for some (and hence all) $z\in\D$.
\end{enumerate}
\end{lemma}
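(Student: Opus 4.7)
The plan is to translate everything back to $\H$ via the conjugation $w = S(z)$ and reduce the claim to straightforward asymptotic computations coupled with Proposition \ref{prop:hyperbolic-rate-no-z} and the non-tangential convergence of hyperbolic orbits. From \eqref{eq:map-S} one directly computes
$$g^n(z) - \tau = S^{-1}(f^n(w)) - \tau = \frac{-2i\tau}{f^n(w)+i}, \qquad |g^n(z)-\tau| = \frac{2}{|f^n(w)+i|},$$
and likewise
$$1 - |g^n(z)|^2 = \frac{4\,\mathrm{Im}(f^n(w))}{|f^n(w)+i|^2}.$$
These two exact identities will drive the whole argument.

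For the equivalence \textup{(a)} $\Leftrightarrow$ \textup{(b)}: since $|f^n(w)+i|/|f^n(w)| \to 1$ (as $f^n(w) \to \infty$), one has $\alpha^n|g^n(z)-\tau| \cdot |f^n(w)|/\alpha^n \to 2$. So the limit $\lim_{n\to+\infty}\alpha^n|g^n(z)-\tau|$ exists in $(0,+\infty)$ precisely when $|f^n(w)|/\alpha^n$ converges to some finite positive value. By Proposition \ref{prop:hyperbolic-rate-no-z}, the latter happens if and only if $L(w) \in \H$, that is, $f$ is of extremal rate. The ``for some iff for all'' statement follows because both conditions transfer to any point in $\H$ (equivalently in $\D$) simultaneously.

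For the equivalence \textup{(b)} $\Leftrightarrow$ \textup{(c)}: since $f$ is hyperbolic, the orbit $\{f^n(w)\}$ converges to infinity with a definite angle $\theta(w) \in (0,\pi)$, so
$$\frac{\mathrm{Im}(f^n(w))}{|f^n(w)+i|} \xrightarrow[n\to+\infty]{} \sin(\theta(w)) \in (0,1].$$
Combining this with the two exact identities above and the fact that $1+|g^n(z)| \to 2$, one obtains
$$\frac{1 - |g^n(z)|}{|g^n(z)-\tau|} = \frac{\mathrm{Im}(f^n(w))}{|f^n(w)+i|\,(1+|g^n(z)|)} \xrightarrow[n\to+\infty]{} \frac{\sin(\theta(w))}{2} \in (0,+\infty).$$
Multiplying through by $\alpha^n$, the sequences $\alpha^n|g^n(z)-\tau|$ and $\alpha^n(1-|g^n(z)|)$ are asymptotically proportional by a positive finite factor, so one converges in $(0,+\infty)$ if and only if the other does.

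The proof is essentially a chain of elementary identities and limits; the only mild subtlety is invoking the correct hyperbolic facts, namely Proposition \ref{prop:hyperbolic-rate-no-z} (which guarantees the ``for some $\Leftrightarrow$ for all'' dichotomy for (a)) and the non-tangential convergence with definite angle $\theta(w) \in (0,\pi)$ (which keeps $\sin(\theta(w))$ bounded away from zero, ensuring that (b) and (c) genuinely transfer to each other rather than degenerating).
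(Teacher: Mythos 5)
Your proposal is correct and follows essentially the same route as the paper: pass to $\H$ via the conjugation $S$, use the exact expressions for $|g^n(z)-\tau|$ and $1-|g^n(z)|^2$ in terms of $f^n(w)$, invoke Proposition \ref{prop:hyperbolic-rate-no-z} for (a)$\Leftrightarrow$(b), and use the non-tangential convergence with a definite angle to show that $1-|g^n(z)|$ and $|g^n(z)-\tau|$ are asymptotically proportional, giving (b)$\Leftrightarrow$(c). The only slip is a factor of~$2$: your two exact identities give
$$\frac{1-|g^n(z)|}{|g^n(z)-\tau|} = \frac{2\,\mathrm{Im}(f^n(w))}{|f^n(w)+i|\,(1+|g^n(z)|)} \longrightarrow \sin(\theta(w)),$$
not $\sin(\theta(w))/2$; this does not affect the argument since all that is needed is a finite positive limit. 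The paper phrases (b)$\Leftrightarrow$(c) a bit more abstractly (M\"obius maps preserve angles, hence $\{g^n(z)\}$ converges non-tangentially to $\tau$, hence $|g^n(z)-\tau|/(1-|g^n(z)|)$ converges in $[1,+\infty)$), whereas you derive the same fact by explicit computation; the two are equivalent in substance.
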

\begin{proof}
Statements (a) and (b) are clearly equivalent due to Proposition \ref{prop:hyperbolic-rate-no-z}, Definition \ref{def:extremal hyp} and \eqref{eq:rate in D}. On the other hand, since $f$ is hyperbolic, for each $w\in\H$, the orbit $\{f^n(w)\}$ diverges to $\infty$ by an angle (i.e. the limit of $\arg(f^n(w))$ as $n \to +\infty$ exists in $(0,\pi)$). But M\"{o}bius transformations preserve angles and hence each orbit $\{g^n(z)\}$, $z\in\D$, converges to $\tau$ by an angle and in particular, non-tangentially. As a result, the limit $\lim_{n\to+\infty}[\lvert g^n(z)-\tau \rvert/(1-\lvert g^n(z)\rvert)]$ exists in $[1,+\infty)$, which in turn provides the equivalence between statements (b) and (c).
\end{proof}

\begin{lemma}
\label{lm:phs D}
Let $f\vcentcolon\H\to\H$ be parabolic of positive hyperbolic step with Denjoy--Wolff point infinity and $g$ its conjugation in $\D$ with Denjoy--Wolff point $\tau\in\partial\D$. Then, the following are equivalent:
\begin{enumerate}
\item[\textup{(a)}] $f$ is of extremal rate.
\item[\textup{(b)}] $\lim_{n\to+\infty}(n|g^n(z)-\tau|)$ exists in $(0,+\infty)$ for some (and hence all) $z\in\D$.
\item[\textup{(c)}] $\lim_{n\to+\infty}[n^2(1-|g^n(z)|)]$ exists in $(0,+\infty)$ for some (and hence all) $z\in\D$.
\end{enumerate}
\end{lemma}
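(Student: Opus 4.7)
The plan is to pivot around two algebraic identities that come directly from the M\"obius transformation $S$ in \eqref{eq:map-S}. A short computation with $S(\zeta)=i(\tau+\zeta)/(\tau-\zeta)$ yields the key identity
\begin{equation*}
\mathrm{Im}(f^n(S(z))) \;=\; \frac{1-|g^n(z)|^2}{|g^n(z)-\tau|^2}, \qquad z\in\D,\ n\in\N,
\end{equation*}
while \eqref{eq:rate in D} gives $|g^n(z)-\tau|\cdot|f^n(S(z))|\to 2$. The overall structure then parallels Lemma \ref{lm:hyperbolic D}, but crucially with Proposition \ref{prop:phs} and Remark \ref{rem:infinite shift rate} replacing the non-tangential convergence argument used in the hyperbolic case.

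For (a)$\iff$(b), I would multiply \eqref{eq:rate in D} through by $n$ to obtain $n|g^n(z)-\tau|\sim 2n/|f^n(S(z))|$. Hence the limit in (b) exists in $(0,+\infty)$ if and only if $|f^n(S(z))|/n$ does, and by Proposition \ref{prop:phs} this is precisely the extremal-rate condition.

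For (b)$\Rightarrow$(c), one already has (a), so $f$ is of finite shift and $\mathrm{Im}(f^n(S(z)))\to I(S(z))\in(0,+\infty)$. Multiplying the key identity by $n^2$ and inserting (b) produces $n^2(1-|g^n(z)|^2)\to \ell^2\,I(S(z))\in(0,+\infty)$; dividing by $1+|g^n(z)|\to 2$ gives (c).

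The main obstacle is (c)$\Rightarrow$(a), since we do not have finite shift a priori. I would argue by contrapositive using Remark \ref{rem:infinite shift rate}: if $f$ is not of extremal rate, then $|f^n(S(z))|/(n\sqrt{\mathrm{Im}(f^n(S(z)))})\to+\infty$. Substituting $|f^n(S(z))|^2\sim 4/|g^n(z)-\tau|^2$ from \eqref{eq:rate in D} and $\mathrm{Im}(f^n(S(z)))=(1-|g^n(z)|^2)/|g^n(z)-\tau|^2$ from the key identity, the quantity $|f^n(S(z))|^2/(n^2\mathrm{Im}(f^n(S(z))))$ simplifies asymptotically to $4/\bigl(n^2(1-|g^n(z)|^2)\bigr)$. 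For this to diverge, $n^2(1-|g^n(z)|^2)$ must tend to $0$, hence so must $n^2(1-|g^n(z)|)$, contradicting (c). This closes the loop.
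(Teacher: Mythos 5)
Your proposal is correct and follows essentially the same route as the paper: both hinge on the identity $\mathrm{Im}(f^n(S(z)))=(1-|g^n(z)|^2)/|g^n(z)-\tau|^2$ together with \eqref{eq:rate in D} and Proposition \ref{prop:phs} for (a)$\iff$(b), and both handle (c)$\Rightarrow$(a) by contrapositive via Remark \ref{rem:infinite shift rate}, reducing $|f^n(w)|/(n\sqrt{\mathrm{Im}(f^n(w))})\to+\infty$ to $n^2(1-|g^n(z)|^2)\to 0$. The only cosmetic difference is that the paper keeps the exact factor $|g^n(z)+\tau|$ where you substitute the asymptotic constant $2$ from \eqref{eq:rate in D}; the argument is identical.
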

\begin{proof}
Once again, the equivalence between (a) and (b) is straightforward. We will prove that (a) and (c) are equivalent as well. Fix $z\in\D$. Then, quick calculations show that $\mathrm{Im}(f^n(w))=(1-|g^n(z)|^2)/|g^n(z)-\tau|^2$, where $w=S(z)$. This leads to
\begin{equation}
\label{eq:finte shift in D}
n^2\left(1-|g^n(z)|\right)=\left(n|g^n(z)-\tau|\right)^2\frac{\mathrm{Im}(f^n(w))}{1+|g^n(z)|},
\end{equation}
for all $n\in\N$. Suppose, first, that $f$ is of extremal rate and thus of finite shift. Then $I(w)=\lim_{n\to+\infty}\mathrm{Im}(f^n(w))$ exists in $(0,+\infty)$. Hence, the equivalence between (a) and (b), together with \eqref{eq:finte shift in D}, imply that the limit $\lim_{n\to+\infty}(n^2(1-|g^n(z)|))$ exists in $(0,+\infty)$.

Finally, assume that $f$ is not of extremal rate. By Remark \ref{rem:infinite shift rate}, we have
\begin{equation*}
\lim_{n\to+\infty}\frac{|f^n(w)|}{n\sqrt{\mathrm{Im}(f^n(w))}}=+\infty.
\end{equation*}
Executing simple computations through the conformal transformation $S$, this translates to
\begin{equation*}
\lim_{n\to+\infty}\frac{|g^n(z)+\tau|}{n\sqrt{(1-|g^n(z)|^2)}}=+\infty.
\end{equation*}
Consequently, the limit $\lim_{n\to+\infty}(n^2(1-|g^n(z)|))$ necessarily equals $0$ and we are done.
\end{proof}

As evidenced by Lemmas \ref{lm:hyperbolic D} and \ref{lm:phs D}, our two notions of extremality are completely described by the asymptotic behavior of $1-|g^n(z)|$ when working in the unit disc. However, known results in the field of functional analysis relate such quantities with the norms of composition operators with respect to classical spaces of analytic functions. Due to this relation, we will now investigate the applications of our results in regard to composition operators.

Let $X$ be a Banach space of analytic functions in $\D$ and suppose that $g \vcentcolon \D \to \D$ is an analytic mapping. Then, the \textit{composition operator} $C_g$ induced by $g$ and acting on $X$ is given by $C_g(f) = f \circ g$, for $f \in X$.

For the purposes of this article, we will only deal with composition operators acting on the classical Hardy or Bergman spaces. We are going to need solely certain basic facts about these spaces. First of all, the \textit{Hardy space} $H^p\vcentcolon =H^p(\D)$, $p>0$, consists of all the holomorphic functions $f\vcentcolon\D\to\C$ such that
$$\sup\limits_{r\in(0,1)}\int_{0}^{2\pi}|f(re^{i\theta})|d\theta<+\infty.$$
We refer to \cite{Duren-Hp} for a complete exposition on these spaces. It is well-known that, due to Littlewood's Subordination Theorem \cite[Theorem 1.7]{Duren-Hp}, every composition operator is bounded from $H^p$ into itself, $p \geq 1$. In particular, the norm of the operator may be estimated through the following inequality:
\begin{lemma}
{\rm \cite[Corollary 3.7]{Cowen-Maccluer}}
\label{lm:hardy growth}
Let $g \vcentcolon \D \to \D$ be analytic. Then
\begin{equation}
\label{eq:hardy growth}
\left(\frac{1}{1-|g(0)|^2}\right)^{\frac{1}{p}}\le ||C_g||_{H^p} \le \left(\frac{1+|g(0)|}{1-|g(0)|}\right)^{\frac{1}{p}},
\end{equation}
where $||C_g||_{H^p}$ denotes the norm of the operator $C_g$ acting on the Hardy space $H^p$, $p \geq 1$.
\end{lemma}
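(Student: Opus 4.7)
The plan is to prove the two inequalities separately: the upper bound via Littlewood's subordination theorem, and the lower bound by testing against an explicit extremal function in the spirit of a reproducing kernel.

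For the upper bound, set $a=g(0)$ and introduce the M\"obius involution $\phi_a(z)=(a-z)/(1-\bar{a}z)$ of $\D$, which satisfies $\phi_a(0)=a$, $\phi_a(a)=0$, and $\phi_a\circ\phi_a=\mathrm{Id}$. Then $h\vcentcolon =\phi_a\circ g$ fixes the origin, so Littlewood's subordination theorem yields $||F\circ h||_{H^p}\le||F||_{H^p}$ for every $F\in H^p$. Writing $g=\phi_a\circ h$ and applying this with $F=f\circ\phi_a$ gives
$$||C_gf||_{H^p}=||(f\circ\phi_a)\circ h||_{H^p}\le ||f\circ\phi_a||_{H^p}.$$
A standard change of variables on $\partial\D$ identifies $||f\circ\phi_a||_{H^p}^{p}$ with the integral of $|f|^p$ against the Poisson kernel $(1-|a|^2)/|1-\bar{a}e^{i\phi}|^2$. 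This kernel is uniformly bounded by $(1-|a|^2)/(1-|a|)^2=(1+|a|)/(1-|a|)$, and the upper estimate of the lemma follows.

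For the lower bound, I would test $C_g$ against the explicit function $f_a(z)=(1-\bar{a}z)^{-2/p}$. Parseval's identity applied to the geometric-series expansion of $(1-\bar{a}re^{i\theta})^{-1}$ yields
$$\int_0^{2\pi}|1-\bar{a}re^{i\theta}|^{-2}\frac{d\theta}{2\pi}=\frac{1}{1-|a|^2 r^2},$$
and letting $r\to 1^-$ gives $||f_a||_{H^p}^{p}=1/(1-|a|^2)$. On the other hand $(C_gf_a)(0)=f_a(a)=(1-|a|^2)^{-2/p}$, and the subharmonicity of $|F|^p$ guarantees the point-evaluation estimate $|F(0)|\le||F||_{H^p}$. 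Hence $||C_gf_a||_{H^p}\ge(1-|a|^2)^{-2/p}$, and dividing by $||f_a||_{H^p}=(1-|a|^2)^{-1/p}$ produces the desired lower bound on $||C_g||_{H^p}$.

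No genuine obstacle arises: this is a textbook template application of the classical machinery for composition operators. The only mild subtleties are choosing the test function $f_a$ so that both $||f_a||_{H^p}$ and $f_a(a)$ are computable in closed form, and packaging the Poisson-kernel estimate cleanly for the upper bound.
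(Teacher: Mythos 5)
Your proof is correct and is essentially the standard argument from the cited reference [Cowen--MacCluer, Corollary 3.7]: the paper itself gives no proof, only the citation. Both the upper bound via factoring $g = \phi_a \circ h$ with $h(0)=0$ and applying Littlewood subordination plus the Poisson-kernel estimate, and the lower bound via the test function $f_a(z)=(1-\bar a z)^{-2/p}$ with the point-evaluation inequality $|F(0)| \le \|F\|_{H^p}$, are exactly the textbook route.
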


Besides, the \textit{Bergman space} $A^p=A^p(\D)$, $p>0$, consists of all the holomorphic functions $f\vcentcolon\D\to\C$ satisfying
$$\int_{\D}|f(z)|^pdA(z)<+\infty,$$
where $dA(z)$ denotes the normalized Lebesgue area measure in $\D$. We mostly follow \cite{Duren-Apa} for this exposition. Once again, the boundedness of the composition operators acting on a Bergman space also follows from Littlewood's Subordination Theorem. Moreover, combining \cite[Theorem 11.6]{Zhu} and \cite[Theorem 1]{Vukotic} we may extract the following inequality about the norm of a composition operator acting on a Bergman space. 

\begin{lemma}
\label{lm:bergman growth}
Let $g \vcentcolon \D \to \D$ be analytic. Then
\begin{equation}
\label{eq:bergman growth}
\left(\frac{1}{1-|g(0)|^2}\right)^{\frac{2}{p}} \le ||C_g||_{A^p} \le \left(\frac{1+|g(0)|}{1-|g(0)|}\right)^{\frac{2}{p}},
\end{equation}
where $||C_g||_{A^p}$ denotes the norm of the operator $C_g$ acting on the Bergman space $A^p$, $p\ge1$.
\end{lemma}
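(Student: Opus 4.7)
The plan is to prove the two inequalities independently: the lower bound via a test function modeled on the reproducing kernel of $A^p$, and the upper bound via the standard decomposition of $g$ into a disc automorphism and a self-map fixing the origin.

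For the lower bound, I would introduce, for $a\in\D$, the normalized test function
$$\psi_a(z) = \frac{(1-|a|^2)^{2/p}}{(1-\bar a z)^{4/p}}, \qquad z \in \D,$$
which is essentially the normalized reproducing kernel of $A^p$ from \cite[Theorem 11.6]{Zhu}. A change of variables via the involution $\phi_a(z) = (a-z)/(1-\bar a z)$, whose Jacobian equals $(1-|a|^2)^2/|1-\bar a z|^4$, immediately yields $\|\psi_a\|_{A^p} = 1$. Evaluating at the origin with $a = g(0)$ gives
$$\bigl(C_g\psi_a\bigr)(0) = \psi_a\bigl(g(0)\bigr) = (1-|g(0)|^2)^{-2/p},$$
and the pointwise bound $|f(0)|\le \|f\|_{A^p}$ (a consequence of the subharmonicity of $|f|^p$) applied to $f = C_g\psi_a$ completes the lower estimate in \eqref{eq:bergman growth}.

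For the upper bound, I would write $g = \phi_{-g(0)}\circ\psi$ with $\psi = \phi_{g(0)}\circ g$, so that $\psi(0) = 0$ and $C_g = C_\psi \circ C_{\phi_{-g(0)}}$. Integrating Littlewood's subordination inequality (in the $H^p$ form) over concentric circles of radius $r$ against $r\,dr$ produces its Bergman analogue, giving $\|C_\psi\|_{A^p}\le 1$. It remains to bound $\|C_{\phi_{-g(0)}}\|_{A^p}$, which after the change of variables $w = \phi_{-g(0)}(z)$ reduces to estimating the weight $(1-|g(0)|^2)^2/|1+\overline{g(0)}\,w|^4$ pointwise on $\D$; the elementary bound $(1-|a|^2)/|1-\bar a w|^2 \le (1+|a|)/(1-|a|)$ then produces the right-hand side of \eqref{eq:bergman growth} after taking $p$-th roots. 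This is the quantitative content of \cite[Theorem 1]{Vukotic}.

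The main subtlety is the promotion of Littlewood's subordination principle from $H^p$ to $A^p$; once this is granted, every remaining step is either a direct change of variables or a pointwise manipulation of the Möbius factor. Both bounds are sharp, with extremality witnessed by the test family $\psi_a$ on the left and by disc automorphisms on the right, and the argument structurally parallels the one for $H^p$ already used in Lemma \ref{lm:hardy growth}.
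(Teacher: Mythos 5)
The paper does not actually prove this lemma; it simply cites \cite[Theorem 11.6]{Zhu} and \cite[Theorem 1]{Vukotic}, and your argument is a correct reconstruction of the standard proof underlying those references: the lower estimate via the normalized Bergman reproducing kernel $\psi_a$ combined with the mean-value inequality $|f(0)|\le\|f\|_{A^p}$, and the upper estimate via the factorization $C_g=C_\psi\circ C_{\phi}$ with $\psi(0)=0$, Littlewood subordination giving $\|C_\psi\|_{A^p}\le1$, and a change of variables for the automorphism factor. All the steps are sound, including the promotion of Littlewood's inequality to the area integral by integrating in $r$.

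One small bookkeeping slip worth fixing: with your convention $\phi_a(z)=(a-z)/(1-\bar a z)$, the map $\phi_a$ is its own inverse, so the inverse of $\phi_{g(0)}$ is $\phi_{g(0)}$ itself, not $\phi_{-g(0)}$. The intended decomposition is therefore $g=\phi_{g(0)}\circ\psi$ with $\psi=\phi_{g(0)}\circ g$, and the Jacobian after the substitution $w=\phi_{g(0)}(z)$ is $(1-|g(0)|^2)^2/|1-\overline{g(0)}\,w|^4$ rather than with a $+$ sign. Since the pointwise estimate $(1-|a|^2)/|1\mp\bar a w|^2\le(1+|a|)/(1-|a|)$ you use only relies on $|1\mp\bar a w|\ge 1-|a|$, the final bound is unchanged, so this is purely a sign typo and not a gap.
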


We are now ready to continue with our two final results. The cornerstones for their proofs will be Lemmas \ref{lm:hyperbolic D} and \ref{lm:phs D}. Since the proofs are similar, we will omit the proof of the second one, which refines \cite[Corollaries 4.3(c) and 4.5(c)]{KTZ-Operators}, for the sake of avoiding repetition.

\begin{corollary}
\label{thm:norm hyp}
Let $f\vcentcolon\H\to\H$ be hyperbolic with Denjoy--Wolff point infinity. Let $\alpha=f'(\infty)>1$ and suppose that $g$ is a conjugation $f$ in $\D$. The following are equivalent:
\begin{enumerate}[\hspace{0.5cm}\rm(a)]
\item $f$ is of extremal rate;
\item There exists $C_1\vcentcolon =C_1(g)\ge1$ such that
\begin{equation*}
\dfrac{1}{C_1} \leq \dfrac{||C_{g^n}||_{H^p}^p}{\alpha^n} \leq C_1,\quad \textup{for all }n\in\N \textup{ and all }p\ge1;
\end{equation*}
\item There exists $C_2\vcentcolon =C_2(g)\ge1$ such that
\begin{equation*}
  \dfrac{1}{C_2} \leq \dfrac{||C_{g^n}||_{A^p}^p}{\alpha^{2n}} \leq C_2, \quad\textup{for all }n\in\N \textup{ and all }p\ge1.
\end{equation*}
\end{enumerate}
\end{corollary}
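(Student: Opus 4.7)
The plan is to pipe condition (a) through Lemma \ref{lm:hyperbolic D}, which already reduces the extremal rate to the statement that $\alpha^n(1-|g^n(0)|)$ converges in $(0,+\infty)$, and then to read this asymptotic behavior off the norm inequalities of Lemmas \ref{lm:hardy growth} and \ref{lm:bergman growth} applied to the iterate $g^n$.

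For the Hardy case, applying Lemma \ref{lm:hardy growth} to $g^n$ and raising to the $p$-th power yields
\begin{equation*}
\frac{1}{1-|g^n(0)|^2}\;\leq\;\|C_{g^n}\|_{H^p}^p\;\leq\;\frac{1+|g^n(0)|}{1-|g^n(0)|},
\end{equation*}
with bounds independent of $p$. Multiplying through by $1-|g^n(0)|$ and using $|g^n(0)|<1$ shows that $\|C_{g^n}\|_{H^p}^p(1-|g^n(0)|)\in (1/2, 2)$ uniformly in $n\in\N$ and $p\geq 1$, so $\|C_{g^n}\|_{H^p}^p/\alpha^n$ is comparable, uniformly in both variables, to $1/[\alpha^n(1-|g^n(0)|)]$. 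The Bergman case is entirely analogous: Lemma \ref{lm:bergman growth} gives $\|C_{g^n}\|_{A^p}^p(1-|g^n(0)|)^2\in(1/4, 4)$, so $\|C_{g^n}\|_{A^p}^p/\alpha^{2n}$ is comparable to $1/[\alpha^n(1-|g^n(0)|)]^2$.

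The forward implications (a) $\Rightarrow$ (b), (c) are immediate from Lemma \ref{lm:hyperbolic D}(c): since $\alpha^n(1-|g^n(0)|)$ tends to a positive finite limit, it lies in a compact subinterval of $(0,+\infty)$ for all $n$, so the comparabilities above yield the required uniform two-sided bounds.

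The converse direction is where some care is needed, and constitutes the main obstacle. Uniform bounds on the operator norms only give that $\alpha^n(1-|g^n(0)|)$ is bounded above and bounded below away from zero, whereas Lemma \ref{lm:hyperbolic D}(c) demands actual convergence to a positive limit. This gap is bridged by a dichotomy: by Proposition \ref{prop:hyperbolic-rate-no-z}, the sequence $|f^n(i)|/\alpha^n$ always has a limit in $(0,+\infty]$ (finite precisely in the extremal-rate case), which via \eqref{eq:rate in D} translates to $\alpha^n|g^n(0)-\tau|$ always having a limit in $[0,+\infty)$; combining this with the non-tangential convergence of the hyperbolic orbit, which implies that the ratio $|g^n(0)-\tau|/(1-|g^n(0)|)$ converges to a positive real number, shows that $\alpha^n(1-|g^n(0)|)$ itself always converges in $[0,+\infty)$. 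Being bounded below by a positive constant then forces the limit into $(0,+\infty)$, which by Lemma \ref{lm:hyperbolic D}(c) gives (a).
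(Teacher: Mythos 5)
Your proof is correct and follows essentially the same route as the paper: apply Lemma \ref{lm:hardy growth} (resp.\ Lemma \ref{lm:bergman growth}) to $g^n$ to see that $\|C_{g^n}\|_{H^p}^p(1-|g^n(0)|)$ (resp.\ $\|C_{g^n}\|_{A^p}^p(1-|g^n(0)|)^2$) is uniformly comparable to $1$, then invoke Lemma \ref{lm:hyperbolic D}(c). The paper asserts the equivalence ``at once'' after these two lemmas, leaving implicit the point you make explicit—that the dichotomy of Proposition \ref{prop:hyperbolic-rate-no-z} guarantees the limit $\alpha^n(1-|g^n(0)|)$ always exists in $[0,+\infty)$, so two-sided uniform bounds suffice to force it into $(0,+\infty)$; your filling-in of that step is correct and indeed the intended reading.
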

\begin{proof}
We will first prove that (a) and (b) are equivalent. Each $g^n$ is an analytic self-map of $\D$. Therefore, Lemma \ref{lm:hardy growth} is applicable and we see that 
\begin{equation}\label{eq:hardy1}
\frac{1}{2} \leq ||C_{g^n}||_{H^p}^p(1-|g^n(0)|) \leq 2, \quad\textup{for all }n\in\N \textup{ and all }p\ge1.
\end{equation}
Then, Lemma \ref{lm:hyperbolic D} provides at once the desired equivalence. The same process, but this time using Lemma \ref{lm:bergman growth}, provides the equivalence between (a) and (c).
\end{proof}

\begin{corollary}
\label{thm:norm phs}
Let $f\vcentcolon\H\to\H$ be parabolic of positive hyperbolic step with Denjoy--Wolff infinity. Suppose that $g$ is a conjugation of $f$ in $\D$. The following are equivalent:
\begin{enumerate}[\hspace{0.5cm}\rm(a)]
\item $f$ is of extremal rate
\item There exists $C_1\vcentcolon= C_1(g)\ge1$ such that
\begin{equation*}
\frac{1}{C_1}\le \frac{||C_{g^n}||_{H^p}^p}{n^2} \le C_1, \quad\textup{for all }n\in\N \textup{ and all }p\ge1; 
\end{equation*}
\item There exists $C_2\vcentcolon=C_2(g)\ge1$ such that
\begin{equation*}
\frac{1}{C_2} \le \frac{||C_{g^n}||_{A^p}^p}{n^4} \le C_2, \quad \textup{for all }n\in\N \textup{ and all }p\ge1.
\end{equation*}
\end{enumerate}
\end{corollary}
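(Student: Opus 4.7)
The plan is to mirror the proof of Corollary \ref{thm:norm hyp} essentially line by line, with Lemma \ref{lm:phs D} taking the place of Lemma \ref{lm:hyperbolic D} and with the polynomial rates $n^2$, $n^4$ replacing the exponential rates $\alpha^n$, $\alpha^{2n}$. Since each iterate $g^n$ is an analytic self-map of $\D$, I may apply Lemmas \ref{lm:hardy growth} and \ref{lm:bergman growth} to $g^n$ in place of $g$. Using the identity $1-|g^n(0)|^2=(1-|g^n(0)|)(1+|g^n(0)|)$ together with $|g^n(0)|\in[0,1)$, both inequalities yield constants that are independent of $n$ and of $p$.

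For part (b), this produces
\begin{equation*}
\tfrac{1}{2}\le \lVert C_{g^n}\rVert_{H^p}^p\,(1-|g^n(0)|)\le 2, \qquad n\in\N,\ p\ge 1,
\end{equation*}
so dividing through by $n^2$ gives the sandwich
\begin{equation*}
\frac{1}{2\,n^2(1-|g^n(0)|)}\ \le\ \frac{\lVert C_{g^n}\rVert_{H^p}^p}{n^2}\ \le\ \frac{2}{n^2(1-|g^n(0)|)}.
\end{equation*}
Invoking Lemma \ref{lm:phs D}(c) at $z=0$, $f$ is of extremal rate exactly when $\lim_{n\to+\infty} n^2(1-|g^n(0)|)\in(0,+\infty)$, in which case the sequence $\{n^2(1-|g^n(0)|)\}_n$ is bounded between positive constants and hence so is $\{\lVert C_{g^n}\rVert_{H^p}^p/n^2\}_n$, uniformly in $p\ge 1$. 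Conversely, if such a two-sided bound holds, the sandwich forces $\{n^2(1-|g^n(0)|)\}_n$ to be bounded above and below by positive constants; but by the non-extremal case of the proof of Lemma \ref{lm:phs D} (see also Remark \ref{rem:infinite shift rate}) the only alternative to $f$ being of extremal rate is $n^2(1-|g^n(0)|)\to 0$, which is impossible. This establishes (a)$\Leftrightarrow$(b).

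The Bergman case is treated identically: Lemma \ref{lm:bergman growth} applied to $g^n$ gives
\begin{equation*}
\tfrac{1}{2}\le \lVert C_{g^n}\rVert_{A^p}^p\,(1-|g^n(0)|)^2\le 2, \qquad n\in\N,\ p\ge 1,
\end{equation*}
and dividing by $n^4=(n^2)^2$ converts the two-sided control of $n^2(1-|g^n(0)|)$ into the two-sided control of $\lVert C_{g^n}\rVert_{A^p}^p/n^4$, yielding (a)$\Leftrightarrow$(c). I do not anticipate any genuine obstacle; the proof is essentially bookkeeping with explicit constants. The one point worth stating carefully is the full dichotomy coming out of Lemma \ref{lm:phs D} (positive finite limit versus limit zero), which is what allows the directions (b)$\Rightarrow$(a) and (c)$\Rightarrow$(a) to be read off immediately from the sandwich estimates without any separate argument to rule out oscillation of $n^2(1-|g^n(0)|)$.
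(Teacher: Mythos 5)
Your proof is correct and follows exactly the route the paper indicates: the authors omit the proof of this corollary, stating that it is similar to that of Corollary \ref{thm:norm hyp}, and your argument reproduces that proof line by line with Lemma \ref{lm:phs D} replacing Lemma \ref{lm:hyperbolic D} and $n^2$, $n^4$ replacing $\alpha^n$, $\alpha^{2n}$. Your explicit remark that the backward implications rely on the dichotomy in Lemma \ref{lm:phs D} (the limit $n^2(1-|g^n(0)|)$ is either a positive finite number or zero, ruling out oscillation) is a helpful clarification of a point the paper leaves implicit.
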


\end{document}